
\documentclass[12pt,english]{article}
\usepackage{amsmath, amsthm, latexsym, amssymb,url}
\usepackage{amsfonts}
\usepackage{xypic}
\usepackage{epsfig}

\usepackage{graphicx}
\usepackage{yhmath}
\usepackage{mathdots}
\usepackage{mathtools} 
\usepackage{pifont}
\usepackage{diagbox}
\usepackage{makecell}

\input xy
\xyoption{all}

\newcommand{\shrinkmargins}[1]{
  \addtolength{\textheight}{#1\topmargin}
  \addtolength{\textheight}{#1\topmargin}
  \addtolength{\textwidth}{#1\oddsidemargin}
  \addtolength{\textwidth}{#1\evensidemargin}
  \addtolength{\topmargin}{-#1\topmargin}
  \addtolength{\oddsidemargin}{-#1\oddsidemargin}
 \addtolength{\evensidemargin}{-#1\evensidemargin}
  }

\shrinkmargins{.7}

\theoremstyle{plain}

\newtheorem{theorem}{Theorem}[section]
\newtheorem{corollary}[theorem]{Corollary}
\newtheorem{lemma}[theorem]{Lemma}
\newtheorem{proposition}[theorem]{Proposition}
\newtheorem{question}[theorem]{Question}

\newtheorem*{teo}{Theorem}
\newtheorem*{defi}{Definition}

\newtheorem{definition}[theorem]{Definition}

\theoremstyle{remark}
\newtheorem{remark}[theorem]{Remark}

\theoremstyle{definition}
\newtheorem{example}[theorem]{Example}

\theoremstyle{fact}

\theoremstyle{claim}

\def \Z { \mathbb{Z}}
\def \Q { \mathbb{Q}}

\def \C { \mathbb{C}}

\def \det { \text{det}}

\def \tr { \text{tr}}

\begin{document}

\thispagestyle{empty}
\setcounter{tocdepth}{7}

\title{Weak arithmetic equivalence.}
\author{Guillermo Mantilla-Soler}

\date{}

\maketitle

\maketitle

\begin{abstract}
Inspired by the invariant of a  number field given by its zeta function, we define the notion of {\it weak arithmetic equivalence}, and show that under certain ramification hypotheses, this equivalence determines the local root numbers of the number field. This is analogous to a result of Rohrlich  on the local root numbers of a rational elliptic curve.  Additionally, we prove that for tame non-totally real number fields, the  integral trace form is invariant under arithmetic equivalence.\\
\end{abstract}

\section{ Introduction}
 
One of the most fundamental arithmetical invariants of a number field is its Dedekind zeta function. It is well known that pairs of number fields with the same zeta function, {\it Arithmetically equivalent number fields}, share many arithmetic invariants. Among them the discriminant, unit group, signature, the product of class number times regulator and some others (see \cite[III, \S1]{Klingen}). A classic result of R. Perlis (see \cite[Corollary to Theorem 1]{Perlis}) states that any two arithmetically equivalent number fields have isometric rational trace forms. Since the rational trace form per se is not of an arithmetic nature, we are interested to see how arithmetic equivalence relates to the arithmetic version of the trace form i.e., the integral trace form. We have studied this briefly in the past (see \cite[\S2]{Manti1}) and have seen that in order to obtain any implication along the lines of  Perlis, we must avoid number fields with wild ramification. In this paper, we study this relation in detail. For example we show that any two tame arithmetically equivalent number fields that are ramified at infinity have always isometric integral trace forms. Furthermore, we define a ``finite" version of arithmetic equivalence and we show that under restricted conditions, such an invariant determines the integral trace form. We also exhibit a relation between our new defined invariant and the local root numbers associated to the number field in question (see Theorem \ref{LAEIsoLocRoot}). This last point of view is analogous to a result of Rohrlich that shows that the bad part of the $L$-function of a semistable elliptic curve determines its local root numbers (see Theorem \ref{rohrlic}).  
 
 \subsection{Motivation and results}
To understand the motivation behind our definitions, it's important to review Perlis' result on the rational trace form. Let $K$ be a number field of degree $n$, and let $s_{n}$ be the symmetric group in $n$ symbols. The Dedekind zeta function $\zeta_{K}$ is the  $L$-function of an Artin representation $\rho_{K}$ of $G_{\Q}$, namely the representation obtained by composing the permutation representation $\pi_{K}: G_{\Q} \to s_{n}$ and the natural inclusion $j: s_{n} \to {\rm GL}_{n}(\C)$. Since we are interested in the equivalence class of the representation $\rho_{K}$, we think of it as an element in ${\rm H}^{1}(\Q, {\rm GL}_{n}(\C))$, and the same for $\pi_{K}$. The natural inclusion \[\iota:  s_{n} \to O_{n}(\overline{\Q})\] induces a map of pointed sets \[\iota^{*}:  {\rm H}^{1}(\Q, s_{n}) \to {\rm H}^{1}(\Q, O_{n}).\] Since ${\rm H}^{1}(\Q, O_{n})$ classifies isometry classes of non-degenerate rational quadratic forms of dimension $n$, there exists a quadratic form corresponding to $\iota^{*}(\pi_{K})$.  Perlis' realization(see \cite[Lemma 1.b]{Perlis}) is that such a form is precisely {\it the rational trace form} i.e., the rational quadratic form associated to the bilinear pairing 
\[
\begin{array}{cccc}
& K \times K  &  \rightarrow & \Q  \\  & (x,y) & \mapsto &
\tr_{K/\Q}(xy).
\end{array}\]

The above result can be interpreted as a relation between the rational trace form of the field $K$ and the representation $\rho_{K}$. Presumably such a relation led Perlis to:

\begin{theorem}[Perlis]
Let $K$ and $L$ be arithmetically equivalent number fields. Then, $K$ and $L$ have isometric rational trace forms.
\end{theorem}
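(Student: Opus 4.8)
The plan is to translate the hypothesis $\zeta_{K}=\zeta_{L}$ into a statement about the cohomology classes $\iota^{*}(\pi_{K})$ and $\iota^{*}(\pi_{L})$, and then to compare the corresponding quadratic forms place by place. First I would recall Gassmann's characterization: arithmetic equivalence is equivalent to the isomorphism $\rho_{K}\cong\rho_{L}$ of the associated Artin representations. Concretely, choosing a Galois extension $M/\Q$ containing both fields with $G=\gal(M/\Q)$, $H=\gal(M/K)$ and $H'=\gal(M/L)$, this says that the permutation modules $\Q[G/H]$ and $\Q[G/H']$ are isomorphic as $\Q[G]$-modules, equivalently that every conjugacy class of $G$ meets $H$ and $H'$ in the same number of elements. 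By Perlis' realization recalled above, the two rational trace forms are exactly the classes $\iota^{*}(\pi_{K})$ and $\iota^{*}(\pi_{L})$ in ${\rm H}^{1}(\Q,O_{n})$, so the theorem amounts to showing that these two classes coincide.

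Next I would pass to local invariants. Both forms are nondegenerate of dimension $n$, hence isometric over $\overline{\Q}$; to upgrade this to an isometry over $\Q$ I would invoke the Hasse--Minkowski theorem and verify isometry at every completion. At the archimedean place the trace form has signature $(r_{1}+r_{2},r_{2})$, and since arithmetically equivalent fields share the signature $(r_{1},r_{2})$, the real forms are isometric. The signed discriminant of the trace form equals, up to squares, the field discriminant, and $d_{K}=d_{L}$ for arithmetically equivalent fields, so the discriminants agree at every prime as well. It therefore remains only to compare the Hasse--Witt invariants of $q_{K}$ and $q_{L}$ at each finite prime $p$.

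The main obstacle is precisely this local comparison at the finite primes. Here I would use that the restriction of $\rho_{K}$ to a decomposition group at $p$, equivalently the $\Q_{p}[G]$-module obtained from $\Q[G/H]$, is itself a Gassmann invariant and hence agrees for $K$ and $L$; the Hasse--Witt invariant of the $p$-adic trace form should then be forced to coincide, being expressible through this module-theoretic datum via a Stiefel--Whitney/spinor-type formula relating $w_{2}$ of the trace form to the underlying permutation character. I expect this to be the delicate step: two arithmetically equivalent fields need not have isomorphic completions $K\otimes\Q_{p}$ and $L\otimes\Q_{p}$ at wildly ramified primes, so one cannot simply match the local algebras and must instead argue at the coarser level of quadratic-form invariants. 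This is exactly the interface at which the finer, ramification-sensitive information developed in the present paper enters. Granting the equality of all local Hasse--Witt invariants, Hasse--Minkowski yields $q_{K}\cong q_{L}$ over $\Q$, completing the proof.
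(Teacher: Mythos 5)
Your proposal follows essentially the same route as the paper's sketch of Perlis' argument: translate arithmetic equivalence into the isomorphism $\rho_{K}\cong\rho_{L}$, reduce to the Hasse--Minkowski local--global principle, match the signature at infinity and the discriminant, and derive equality of the local Hasse--Witt invariants from the representation via Stiefel--Whitney theory. The one step you explicitly grant --- that $h_{p}(q_{K})$ is expressible through the character of $\rho_{K}$ --- is exactly what the paper fills in with Serre's formula $w_{2}(K)_{p}=h_{p}(q_{K})(2,d)_{p}$ combined with Deligne's identity $w_{2}(K)_{p}=W_{p}(\rho_{K})/W_{p}(\det\rho_{K})$, whose right-hand side manifestly depends only on the equivalence class of $\rho_{K}$ restricted to a decomposition group; this matters because for a Gassmann pair $\pi_{K}$ and $\pi_{L}$ are \emph{not} conjugate in $s_{n}$, so one genuinely needs character-level (rather than permutation-level) invariance of $w_{2}$, precisely the point your phrase ``forced to coincide'' was standing in for.
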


The main ideas behind Perlis' proof of the above are the following: Using formulas of Serre for the local Hasse invariants of the trace form Perlis shows that for every prime $p$ the local $p$-Hasse invariant of the trace form $\iota^{*}(\pi_{K})$ can be written in terms of the $p$-local Stiefel-Whitney class of the representation $\rho_{K}$. Moreover, due to a formula of Deligne, such numbers can be written in terms of local root numbers of the representations $\rho_{K}$ and $\det(\rho_{K})$ (see \S\ref{Dedekind} for details). By evaluating $\rho_{K}$ at complex conjugation, it can be seen that the signature of $\iota^{*}(\pi_{K})$ is determined by the representation $\rho_{K}$. It follows, thanks to the Hasse principle, that the isometry class of the rational trace $\iota^{*}(\pi_{K})$ is completely determined by the representation $\rho_{K}$. On the other hand, by the Chebotarev's density theorem, the representation $\rho_{K}$ is completely determined by  $\zeta_{K}$. In particular, two number fields with the same Dedekind zeta function share their rational trace.\\

 \subsubsection{Main results}
Recall that {\it the integral trace form} over $K$ is the integral quadratic form, denoted by $q_{K}$, that is obtained by restricting the rational trace form to the maximal order $O_{K}$. From an arithmetic point of view, the integral trace form is a better invariant than the rational trace form. One can see, for example, that the rational trace form does not even determine the discriminant of the number field: any $\Z/3\Z$-extension of $\Q$ has rational trace isometric to $\langle 1,1,1\rangle$(for a more general situation see \cite[Corollary I.6.5]{conner}). On the other hand, the integral trace can characterize the field in some non-trivial cases (see \cite{Manti}). It is natural then to wonder whether or not arithmetical equivalence implies equality between integral traces. An immediate observation that one can make from Perlis' work is that to ensure an isometry between the rational traces of two number fields, it is not necessary to have equality between their Dedekind zeta functions but only local information at finitely many places. With this observation in mind, we set course to find out if knowing the local root numbers is sufficient to determine the integral trace form. Explicitly we prove:

\begin{teo}[cf. Theorem \ref{samerootsametrace}]

Let $K,L$ be two non-totally real tamely ramified number fields of the same discriminant and signature. Then, the integral trace forms of $K$ and $L$ are isometric if and only if the $p$-local root numbers of $\rho_{K}$ and $\rho_{L}$ coincide for every odd prime $p$ that divides ${\rm disc}(K)$. 

\end{teo}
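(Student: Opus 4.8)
The plan is to prove both implications by matching the two integral trace forms place by place and then invoking a rigidity theorem for indefinite lattices. The reduction rests on the observation that, because $K$ and $L$ are non-totally real, each complex place contributes a hyperbolic plane to the real trace form (the $\C$ factor in $K\otimes\R$ gives $\tr(z^{2}) = 2(a^{2}-b^{2})$ in real coordinates), so both $q_{K}$ and $q_{L}$ are indefinite; setting aside the degenerate low-rank cases $n \leq 2$ (where an imaginary quadratic field is already pinned down by its discriminant), we may assume $n \geq 3$. For indefinite $\Z$-lattices of rank $\geq 3$, Eichler's theorem guarantees that each spinor genus consists of a single isometry class, so it suffices to show that $q_{K}$ and $q_{L}$ lie in the same genus and in the same spinor genus. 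I would isolate the spinor-genus question as a separate local computation (see the obstacle below) and concentrate first on the genus, i.e.\ on $\Z_{p}$-isometry at every prime together with $\R$-isometry.

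The real place is handled by the equality of signatures. At a prime $p \nmid {\rm disc}(K) = {\rm disc}(L)$ the local algebra is \'etale, the local trace form is unimodular, and for odd $p$ such a lattice is determined by its rank and discriminant, both of which agree because the two fields share degree and discriminant; the unramified prime $2$ is similar once one records the $2$-adic type of an unramified trace form. At a tamely ramified prime $p$ the different has exponent $e-1$ on each ramified completion, and I would use the explicit tame description of $q_{K} \otimes \Z_{p}$ as an orthogonal sum of a unimodular block and a block of scale exactly $p$, whose ranks and unit discriminants are forced by the local ramification data and the $p$-adic valuation of the shared discriminant. The content not yet pinned down by rank, discriminant and signature is then exactly the collection of Hasse--Witt invariants $\epsilon_{p}(q_{K}\otimes\Q_{p})$.

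Here I would invoke the chain recalled after Perlis' theorem, expressing the $p$-local Hasse invariant of the trace form through the $p$-local Stiefel--Whitney class of $\rho_{K}$ and, via Deligne's formula, through the local root numbers of $\rho_{K}$ and $\det(\rho_{K})$; the root number of the quadratic character $\det(\rho_{K})$ depends only on the discriminant and signature and hence is common to $K$ and $L$. Thus, for an odd prime $p \mid {\rm disc}(K)$, equality of the $p$-local root numbers of $\rho_{K}$ and $\rho_{L}$ is equivalent to equality of $\epsilon_{p}$, which together with the matching Jordan data gives $\Z_{p}$-isometry there; at the unramified odd primes and at $\infty$ the invariants already agree, and Hilbert reciprocity (the product formula $\prod_{v}\epsilon_{v} = 1$) then forces agreement of $\epsilon_{2}$ as well. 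Running this equivalence forward yields the genus equality, and hence (modulo the spinor step) the isometry; running it backward gives the converse, since an integral isometry forces a rational one and therefore equality of all local Hasse invariants, in particular at the odd ramified primes, whence equal root numbers.

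The main obstacle is the passage from rational to integral data at the ramified primes and at $2$: the Hasse--Witt invariant and the product formula only control the $\Q_{p}$-isometry class, whereas the genus demands $\Z_{p}$-isometry, so the crux is a local rigidity lemma showing that under tameness the finer lattice invariants (the Jordan-block ranks and unit discriminants, and the $2$-adic type) are already determined by the discriminant, the signature and the $\Q_{p}$-class. The second delicate point, needed to upgrade genus equality to an honest isometry, is to verify that the genus of an indefinite tame trace form carries a single spinor genus; this amounts to computing the local spinor-norm groups $\theta({\rm O}^{+}(q_{K}\otimes\Z_{p}))$ at the ramified primes and checking that, modulo squares, they cover the relevant idele class group, which I expect to follow from the scale-$p$ shape of the ramified block together with indefiniteness.
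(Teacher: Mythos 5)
Your proposal is correct and follows essentially the same route as the paper's proof: Serre's and Deligne's formulas, together with the fact that $\det(\rho_{K})$ depends only on the shared discriminant (Lemma \ref{samerootdisc}), translate equality of odd local root numbers into equality of Hasse invariants; the tame Jordan shape of Theorem \ref{GenusShape} upgrades this to genus equality; and the one-spinor-genus theorem plus Eichler's theorem yield the isometry. The two ``obstacles'' you flag are exactly the ingredients the paper imports from companion work --- the local rigidity lemma at odd $p$ and the $2$-adic isometry (\cite[Lemma 2.1, Proposition 2.7]{Manti2}) and the single-spinor-genus result (\cite[Theorem 2.12]{Manti4}, which in fact needs no indefiniteness).
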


Since the Dedekind zeta function $\zeta_{K}$ determines  the discriminant and the signature of the field $K$,  see \cite{Perlis1}, Theorem \ref{samerootsametrace} gives a two fold generalization of \cite[Corollary 1]{Perlis}:
\begin{enumerate}
 
 \item On one hand, the conclusion of having isometric integral traces is stronger than having isometric rational traces.
 
 \begin{example}
 Let $K, L$ be two Galois cubic fields with different discriminant (take for instance the two cubic fields of discriminant $49$ and $81$ respectively). As pointed out before we have that $q_{K} \otimes \Q \cong q_{L} \otimes \Q \cong \langle 1,1,1\rangle,$ but clearly $q_{K} \not \cong q_{L}.$
 \end{example}
 
 \item On the other hand, the hypothesis of having the same local root numbers is weaker than that of having the same Dedekind zeta functions.
\begin{example}
Take any two non isomorphic tame Galois cubic fields of the same discriminant (take for instance the two cubic fields of discriminant $8281=7^2\cdot 13^2$). Since their integral traces are isomorphic (see \cite[Theorem 3.1]{Manti}) it follows from Theorem \ref{samerootsametrace} that they have the same root numbers at every prime. However, by Lemma \ref{solitario}, they do not have the same zeta function.
\end{example}
 
 \end{enumerate}
 We must however impose some necessary ramification restrictions so that the analogy is still valid in the integral case (see Remark \ref{necesario}).

\begin{teo}[cf Theorem \ref{AEimpliesSameTrace}]

Let $K,L$ be two non-totally real tamely ramified arithmetically equivalent number fields. Then, the integral trace forms $q_{K}$ and  $q_{L}$ are isometric.

\end{teo}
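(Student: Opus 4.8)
The plan is to deduce this statement directly from Theorem~\ref{samerootsametrace}: I will show that arithmetic equivalence automatically supplies every hypothesis of that theorem. The single input that does the real work is the classical equivalence between arithmetic equivalence and the isomorphism of Artin representations
\[
\zeta_{K} = \zeta_{L} \iff \rho_{K} \cong \rho_{L}
\]
as representations of $G_{\Q}$. To justify the implication I need, recall that $\zeta_{K} = L(s,\rho_{K})$, so that equality of the two zeta functions forces equality of the corresponding Dirichlet series; comparing Euler factors at the unramified primes recovers $\tr\,\rho_{K}(\mathrm{Frob}_{p}) = \tr\,\rho_{L}(\mathrm{Frob}_{p})$ for a set of primes of density one, and by Chebotarev's density theorem this determines the character of $\rho_{K}$, hence $\rho_{K}$ up to isomorphism. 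Thus from $\zeta_{K} = \zeta_{L}$ I obtain $\rho_{K} \cong \rho_{L}$.

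Granting $\rho_{K} \cong \rho_{L}$, I would next read off the numerical hypotheses of Theorem~\ref{samerootsametrace}. The signatures of $K$ and $L$ agree: the number of real places equals $\tr\,\rho_{K}(c)$, where $c$ is a complex conjugation, and this is unchanged under the isomorphism; since $\dim \rho_{K} = \dim \rho_{L}$ gives $[K:\Q] = [L:\Q]$, the full signature is then determined. The discriminants agree as well, by the conductor--discriminant formula together with the fact that $|{\rm disc}(K)|$ is the Artin conductor of $\rho_{K}$ (both facts are recorded classically in \cite{Perlis1}). It remains only to match the local root numbers, and this is where $\rho_{K}\cong\rho_{L}$ is used at full strength rather than through its coarser numerical shadows.

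For the root numbers I would simply invoke functoriality of the local constants. A global isomorphism $\rho_{K}\cong\rho_{L}$ of representations of $G_{\Q}$ restricts at each place $v$ to an isomorphism $\rho_{K}|_{G_{\Q_{v}}} \cong \rho_{L}|_{G_{\Q_{v}}}$; since the local root number $W_{v}(\rho)$ depends only on the isomorphism class of the local representation, it follows that $W_{v}(\rho_{K}) = W_{v}(\rho_{L})$ at every place, in particular at every odd prime dividing ${\rm disc}(K)$. Together with the standing hypotheses that $K$ and $L$ are non-totally real and tamely ramified, all the assumptions of Theorem~\ref{samerootsametrace} are now in place, and it yields $q_{K} \cong q_{L}$.

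The argument is short precisely because the analytic input (the Serre--Deligne formulas for the Hasse invariants, the Hasse principle, and the root-number computation) has already been packaged into Theorem~\ref{samerootsametrace}; here that theorem is used as a black box. I do not anticipate a genuine obstacle: the implication $\zeta_{K}=\zeta_{L}\Rightarrow\rho_{K}\cong\rho_{L}$ is standard, and local root numbers are manifestly functorial in the local representation. The only point demanding a little care is to ensure the reduction is not circular---that arithmetic equivalence is genuinely being exploited through the representation isomorphism $\rho_{K}\cong\rho_{L}$, which is what feeds the equality of local root numbers, and not merely through the discriminant and signature, which alone would be insufficient to invoke Theorem~\ref{samerootsametrace}.
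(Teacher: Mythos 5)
Your proposal is correct and follows essentially the same route as the paper: the paper's proof is the one-line observation that arithmetically equivalent fields share discriminants, signatures, and local root numbers, and then invokes Theorem~\ref{samerootsametrace} together with Proposition~\ref{ultimaprop} (whose content is already packaged into the ``in particular'' clause of Theorem~\ref{samerootsametrace} that you use). You merely spell out the standard facts the paper cites implicitly---$\rho_{K}\cong\rho_{L}$ via Chebotarev, signature from $\tr\,\rho_{K}(c)$, discriminant from the conductor, and root numbers from functoriality of local constants---all of which are sound.
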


An interpretation of Theorem \ref{samerootsametrace} is that in order to know the integral trace, you only need local information from the Dedekind zeta function at  the ``bad"  places. Since the zeta function is a product of local $L$-functions, it is natural to wonder how those local factors, at the ramified places, influence the behavior of the integral trace. Inspired by this, we define the notion of {\it weak arithmetic equivalence} and show that indeed the integral trace is determined by local $L$-functions for a large family of number fields.

\begin{defi}[cf Definition \ref{LocArithEqui}]
Let $K,L$ be two number fields. We say that $K$ and $L$ are weakly arithmetically equivalent if and only if  

\begin{itemize}
\item A prime $p$ ramifies in K if and only if $p$ ramifies in $L$,
\item  $L_{p}(s,\rho_{K})=L_{p}(s,\rho_{L})$ for $p \in \{ p: p \mid {\rm disc}(K) \} \cup \{\infty\}.$   
\end{itemize}
\end{defi}

\begin{teo}[cf. Theorem \ref{LAEIsoTrace}]
Let $K,L$ be two weakly arithmetically equivalent number fields which are tame and non-totally real. Suppose that any of the following is satisfied:

\begin{itemize}

\item[(a)]  $K$ and $L$ have degree at most $3$ 

\item[(b)] $K$ has fundamental discriminant.\footnote{Recall that a discriminant is called {\it fundamental} if it is the discriminant of a quadratic field.}

\item[(c)] $K$ and $L$ are Galois over $\Q$.

\end{itemize}
\noindent 
Then, the integral trace forms of $K$ and $L$ are isometric. 

\end{teo}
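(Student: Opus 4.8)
The plan is to reduce the statement to Theorem \ref{samerootsametrace} and then to control the local root numbers at the odd ramified primes using the information recorded in the local $L$-factors.

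\textbf{Step 1: extracting the discriminant and signature.} First I would show that weak arithmetic equivalence already forces $K$ and $L$ to share discriminant and signature. The archimedean factor $L_\infty(s,\rho_K)$ is, up to shifts, $\Gamma_{\mathbb R}(s)^{r_1}\Gamma_{\mathbb C}(s)^{r_2}$, so the equality $L_\infty(s,\rho_K)=L_\infty(s,\rho_L)$ reads off the signature $(r_1,r_2)$; in particular $[K:\Q]=[L:\Q]=n$. For a finite prime $p$ the factor $L_p(s,\rho_K)=\prod_{\mathfrak p\mid p}(1-p^{-f_{\mathfrak p}s})^{-1}$ records the residue-degree data above $p$, and in particular $\dim\rho_K^{I_p}=\sum_{\mathfrak p\mid p}f_{\mathfrak p}$ is the degree of its denominator. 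Since the ramification is tame, the $p$-adic valuation of the discriminant is the Artin conductor exponent $n-\dim\rho_K^{I_p}$, which therefore agrees for $K$ and $L$ at every ramified $p$; as the two fields ramify at the same primes, $|{\rm disc}(K)|=|{\rm disc}(L)|$, and the common signature fixes the sign via ${\rm sign}({\rm disc})=(-1)^{r_2}$. Thus $K$ and $L$ satisfy the hypotheses of Theorem \ref{samerootsametrace}, and it remains only to prove that the $p$-local root numbers of $\rho_K$ and $\rho_L$ agree for every odd prime $p\mid{\rm disc}(K)$.

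\textbf{Step 2: factoring the local root number.} The root number $w_p(\rho_K)$ depends only on $\rho_K|_{D_p}$, i.e.\ on the isomorphism class of the \'etale algebra $K\otimes_{\Q}\Q_p=\prod_{\mathfrak p\mid p}K_{\mathfrak p}$, and it is multiplicative over the factors, $w_p(\rho_K)=\prod_{\mathfrak p\mid p}w\bigl({\rm Ind}_{K_{\mathfrak p}/\Q_p}\mathbf{1}\bigr)$. The unramified factors contribute $1$, so the whole question concerns the tamely ramified local fields occurring in $K\otimes\Q_p$. Writing such a factor in the abelian case $e\mid p-1$ as ${\rm Ind}\,\mathbf{1}=\bigoplus_{j}\chi^{j}$ for a tame character $\chi$ of order $e$, the identity $w(\chi^{j})w(\chi^{-j})=\chi^{j}(-1)$ shows that conjugate pairs of odd order contribute $1$, leaving only the quadratic summand $\chi^{e/2}$ (present exactly when $e$ is even) to produce a genuine quadratic Gauss sum; the non-abelian cases are controlled by the analogous self-duality $w(\sigma)w(\sigma^{\vee})=\det\sigma(-1)$.

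\textbf{Step 3: the case hypotheses and the main obstacle.} The role of (a), (b), (c) is that in each of them the matched residue data, together with $n$ and tameness, pin down the full ramification type $\{(e_{\mathfrak p},f_{\mathfrak p})\}$ above each ramified $p$: for degree at most $3$ the relation $\sum e_{\mathfrak p}f_{\mathfrak p}=n$ leaves a unique ramified possibility; for a fundamental discriminant the conductor exponent $n-\sum f_{\mathfrak p}=1$ forces exactly one factor with $(e,f)=(2,1)$ and all others unramified; and in the Galois case all pairs equal a common $(e,f)$ determined by $f$, $g$ and $e=n/(fg)$. Once the type is fixed, Step 2 reduces each ramified contribution to a quadratic Gauss sum whose value is determined by $p$ and by the square class of the local discriminant, and since ${\rm disc}(K)={\rm disc}(L)$ while the unramified parts (hence their unit discriminant classes) coincide, these square classes agree. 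The delicate point is precisely that $L_p(s,\rho_K)$ sees only the residue degrees and not the full local algebra $K\otimes\Q_p$: two fields with identical local $L$-factors can have genuinely different completions at $p$. The crux is therefore to show this residual ambiguity is invisible to the root number, which is exactly what the pairing identity (killing all odd-order, in particular all odd-$e$ totally ramified, contributions) together with the discriminant square class (controlling the lone quadratic Gauss sum) achieves. Making this uniform across the possibly non-abelian tame extensions arising in case (c), where one must also check that the repeated local factor $M$ is itself pinned down to the extent the root number requires, is the part demanding the most care. Granting it, $w_p(\rho_K)=w_p(\rho_L)$ for every odd $p\mid{\rm disc}(K)$, and Theorem \ref{samerootsametrace} yields the isometry $q_K\cong q_L$.
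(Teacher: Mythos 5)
Your overall architecture is sound and its outer layers are correct: Step 1 is essentially the paper's Lemma \ref{RamtypeLAE} together with Corollary \ref{DetTameLAE} (the tame conductor-discriminant computation $v_p({\rm disc})=n-f_p^K$ plus the signature read off from the $\Gamma$-factor), and your combinatorial analysis in Step 3 --- that in each of the cases (a), (b), (c) the residue data forces the full multiset $\{(e_{\mathfrak p},f_{\mathfrak p})\}$ at every ramified prime --- is right. But there is a genuine gap exactly where you say ``Granting it'': the claim that matching $(e,f)$-data plus equal discriminant class forces $W_p(\rho_K)=W_p(\rho_L)$ is the entire content of the theorem, and your Gauss-sum sketch does not establish it. Concretely: (i) when several ramified factors with even $e$ occur above $p$, each contributes a ramified quadratic character whose individual square class is \emph{not} pinned down --- only the product of the classes is fixed by ${\rm disc}(K)$ --- so you must verify that the product of the corresponding root numbers depends only on that product class; (ii) in case (c) the tame Galois local field $M$ with given $(e,f)$ is genuinely non-unique, and the orthogonal irreducible constituents of ${\rm Ind}_{M/\Q_p}\mathbf{1}$ have root numbers $\pm 1$ whose sign determination is precisely the hard part of the theory (Fr\"ohlich--Queyrut, Deligne); the self-duality identity $W(\sigma)W(\sigma^{\vee})=\det\sigma(-1)$ only gives $W(\sigma)^2$ for self-dual $\sigma$ and does not resolve the sign. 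The paper's own counterexample (the quartic pair with different root numbers at $p=7$ and $p=43$, where the $(e)$-data differs but the $(f)$-data agrees) shows how thin the margin is here, so this step cannot be waved through.

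The paper avoids root-number computations entirely and runs your Corollary \ref{traceroot} reduction in the opposite direction, through quadratic forms: by Theorem \ref{GenusShape}, for odd $p$ the local trace $q_K\otimes\Z_p$ has a Jordan splitting into a unimodular block of rank $f_p^K$ and a $p$-modular block of rank $n-f_p^K$, and since unimodular forms over $\Z_p$ ($p$ odd) are classified by rank and determinant class, matching ramification data plus equal discriminant pins down both blocks --- this is what the imported results \cite[Theorem 3.3, Theorem 2.15, Proposition 2.14]{Manti2} supply in cases (a), (b), (c) respectively (note that case (a) there needs only the common discriminant, not the decomposition types). Same genus plus Theorem \ref{SameSpinSameGen} gives same spinor genus, and Proposition \ref{ultimaprop} (Eichler, using non-totally real) gives the isometry; equality of root numbers is then a \emph{consequence} (Theorem \ref{LAEIsoLocRoot}), not an input. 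To repair your proof with least effort, replace Step 2 by this form-theoretic argument: show $q_K\otimes\Z_p\cong q_L\otimes\Z_p$ directly from the pinned-down $(e,f)$-data and the determinant constraint, then either conclude via genus/spinor genus as the paper does, or, if you insist on routing through Theorem \ref{samerootsametrace}, recover $W_p(\rho_K)=W_p(\rho_L)$ from the equal Hasse invariants via Corollary \ref{traceroot} and Lemma \ref{samerootdisc}.
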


\remark{See Question \ref{conjecture} and the remark after it for further thoughts on Theorem \ref{LAEIsoTrace}.}

\section{Local zeta functions and root numbers}\label{Dedekind}

\subsection{Background}
We start by recalling briefly how the Dedekind zeta function of a number field can be seen as the Artin $L$-function of a representation of the absolute Galois group $G_{\Q}$. See \cite{Mar} and \cite{Roh} for details and unexplained terminology.

\subsubsection{Dedekind zeta and Artin representations} 
Let $L$ be a number field with Galois closure $\tilde{L}$. Let $G(L):=\text{Gal}(\tilde{L}/\mathbb{Q})$ and  $H(L):=\text{Gal}(\tilde{L}/L)$. By composing the natural map $G_{\mathbb{Q}} \to G(L)$ with the natural action 
$G(L) \to \text{Sym}(G(L)/H(L))$, one gets a permutation representation $\pi_{L} \in H^{1}(G_{\mathbb{Q}}, S_{\text{deg}(L)})$ i.e.,  $\pi_{L} =\text{Inf}_{G(L)}^{G_{\mathbb{Q}}}(\text{Ind}_{H(L)}^{G(L)}1)$. Simply put, this is the natural action of $G_{\Q}$ on the embeddings of $L$ into $\tilde{L}$. The usual inclusions \[S_{\text{deg}(L)} \hookrightarrow \text{O}_{\text{deg}(L)}(\mathbb{C}) \hookrightarrow  \text{GL}_{\text{deg}(L)}(\mathbb{C}),\] together with $\pi_{L}$, yield an Artin representation $\rho_{L} \in H^{1}_{{\rm cont}}(G_{\mathbb{Q}}, \text{GL}_{\text{deg}(L)}(\mathbb{C})).$ By the induction property of Artin $L$-functions, the Dedekind zeta function $\zeta_{L}(s)$ of $L$ is nothing other than the Artin $L$-function $L(s,\rho_{L})$ associated to $\rho_{L}$. The function $L(s,\rho_{L})$ is defined as a product of local functions $L_{p}(s,\rho_{L})$ for each finite prime $p$, where the local parts are defined by restricting $\rho_{L}$ to a decomposition subgroup $G_{\mathbb{Q}_{p}}$.  By looking at the usual Euler product of $\zeta_{L}(s)$, we see that the local factors are given by \[L_{p}(s,\rho_{L})=\displaystyle \prod_{i=1}^{g}\left(\frac{1}{1-(p^{-s})^{f_{i}}}\right),\] where $g$ is the number of primes in $L$ lying over $p$ and the $f_{i}$'s are the residue degrees of a rational prime $p$ in its decomposition in $L$.\\ 

\paragraph{{\it Complete $L$-function and root numbers.}}
Given an Artin representation $\rho$, with Artin $L$-function $L(s,\rho)$, its complete $L$-function $\Lambda(s,\rho)$ is defined as \[\Lambda(s,\rho):=  A(\rho)^{s/2}L_{\infty}(s,\rho)L(s, \rho),\] where $A(\rho)$ is a positive integer divisible only by the finite primes at which $\rho$ ramifies, and $L_{\infty}(s,\rho)$ is a Gamma factor that depends on the value of $\rho$ at complex conjugation.  
The complete $L$-function satisfies a functional equation \[\Lambda(s,\rho)= W(\rho)\Lambda(1-s,\rho^{\vee}),\] where $\rho^{\vee}$ is the contragradient representation and $W(\rho)$ is a complex number called the {\it root number of} $\rho$. Due to a result of Deligne (see \cite{Deligne} and \cite[\S3]{Tate}) root numbers can be written as the product of the so called {\it local root numbers} \[ W(\rho)=\prod_{p}W_{p}(\rho).\]  The local root numbers $W_{p}(\rho)$ are complex numbers of norm $1$, moreover $W_{p}(\rho)=1$ whenever $\rho$ is unramified at $p$.

In the case of the permutation representation $\rho_{L}$, $A(\rho_{L})$ is equal to $|{\rm Disc}(L)|$. Moreover, since $\rho_{L}$ is an orthogonal representation it is a result of Fr\"ohlich and Queyrut, see \cite[\S3 Corollary 1]{Tate},  that $W(\rho_{L})=1$. The local infinite factor of $\rho_{L}$ is given by \[L_{\infty}(s,\rho_{L}):=\Gamma_{\mathbb{R}}^{r_1}(s)\Gamma_{\mathbb{C}}^{r_2}(s)\] where $r_{1}$ (resp $r_{2}$) is the number of real (resp complex) embeddings of $L$,  \[\Gamma_{\mathbb{R}}=(\pi)^{-s/2}\Gamma\left(\frac{s}{2}\right) \  {\rm and} \ \Gamma_{\mathbb{C}}=2(2\pi)^{-s}\Gamma(s),\] and  $\Gamma(s)$ is the usual Gamma function. We call the local root numbers $W_{p}(\rho_{L})$ the root numbers of the number field $L$.

\subsubsection{Root numbers and the trace}

The connection between root numbers $W_{p}(\rho_{L})$ of $\rho_{L}$ and the trace form $\tr_{L/\Q}(x^2)$ was first realized by Perlis by relating the results of Serre on Stiefel-Whitney invariants of the representation $\rho_{L}$ and those of Deligne on normalized root numbers.\\

\paragraph{{\it Second Stiefel-Whitney invariant and local root numbers.}} 
Let $L$ be a degree $n$ number field of discriminant $d$. The {\it second Stiefel-Whitney invariant} $w_{2}(L)$ of $L$, or of $\rho_{L}$, is a 2-torsion element in the Brauer group of $\Q$ defined as follows: Recall the standard presentation of the symmetric group $s_{n}$: \[ \left \langle t_{1},...,t_{n-1} : t_{i}^{2}=1 \mbox{ for $1 \leq i \leq n-1$},  (t_{i}t_{i+1})^{3}=1  \mbox{ for $1 \leq i \leq n-2$} , [t_{i},t_{j}]=1  \mbox{ for $ 2 \leq |i-j|$} \right \rangle \] Let $\tilde{s}_{n}$  be the $\pm 1$ central extension of $s_{n}$ defined by \[ \langle s_{1},...,s_{n-1},w : s_{i}^{2}=w^2=1 \mbox{ for $1 \leq i \leq n-1$},  (s_{i}s_{i+1})^{3}=1  \mbox{ for $1 \leq i \leq n-2$}, [s_{i},w]=1 \]  \[ \mbox{ for $ 1\leq i \leq n-1$}, [s_{i},s_{j}]=w  \mbox{ for $2 \leq |i-j|$}  \rangle \] where \[1 \longrightarrow \langle w \rangle \longrightarrow \tilde{s}_{n} \longrightarrow s_{n} \longrightarrow 1\] \[ \hspace{ .6 in} s_{i}  \longmapsto t_{i}.\] The extension $\tilde{s}_{n} \rightarrow s_{n}$ defines an element $\ell_{2} \in {\rm H}^{2}(s_{n}, \pm 1)$. By pulling back \[\pi_{L}: G_{\Q} \to s_{n} \quad {\rm to} \quad \pi^{*}_{L}:  {\rm H}^{2}(s_{n}, \pm 1) \to {\rm H}^{2}(G_{\Q}, \pm 1)\] one obtains the second Stiefel-Whitney invariant $w_{2}(L)=\pi^{*}_{L}(\ell_{2}).$ The local $p$-part $w_{2}(L)_{p}$ of $w_{2}(L)$ is the element of the Brauer group of ${\rm Br}_{2}(\Q_{p})$ obtained from $w_{2}(L)$  via restriction.

\begin{theorem}[Serre] Keeping the notation as the above, for all finite $p$ \[w_{2}(L)_{p}=h_{p}(q_{L})(2,d)_{p}\] where $h_{p}(q_{L})$ and $(\cdot, \cdot)_{p}$ denote the local $p$ Hasse-Witt invariant of the trace form  and the Hasse symbol, respectively.

\end{theorem}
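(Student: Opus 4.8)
The plan is to prove the identity by realizing both sides as obstructions to lifting the class $\pi_{L}$ through a double cover, and then comparing the two natural double covers of $s_{n}$ involved. First I would pass to the split picture: the trace form of the split \'etale algebra $\Q^{n}$ is the standard form $\langle 1,\dots,1\rangle$, and Galois descent identifies $q_{L}$ with the twist of $\langle 1,\dots,1\rangle$ by the permutation cocycle $\pi_{L}\in {\rm H}^{1}(G_{\Q},s_{n})$ along $\iota\colon s_{n}\hookrightarrow O_{n}(\overline{\Q})$. Consequently $\iota_{*}(\pi_{L})\in {\rm H}^{1}(G_{\Q},O_{n})$ is exactly the class of the quadratic form $q_{L}$, and the whole statement reduces to a universal comparison of two cohomology classes in $\mathrm{H}^{2}$ attached to $\pi_{L}$, restricted to a decomposition group $G_{\Q_{p}}$.

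On the Stiefel--Whitney side, $w_{2}(L)_{p}$ is by definition the restriction of $\pi_{L}^{*}(\ell_{2})$, i.e.\ the obstruction to lifting $\pi_{L}$ through the abstract $\pm1$ cover $\tilde{s}_{n}\to s_{n}$ of the given presentation, in which the transposition lifts are \emph{involutions} ($s_{i}^{2}=1$) and disjoint transposition lifts anticommute ($[s_{i},s_{j}]=w$). On the Hasse--Witt side, I would use the Clifford (Pin) double cover $1\to \mu_{2}\to \mathrm{Pin}_{n}\to O_{n}\to 1$ and its connecting map $\delta\colon {\rm H}^{1}(G_{\Q_{p}},O_{n})\to {\rm H}^{2}(G_{\Q_{p}},\mu_{2})={\rm Br}_{2}(\Q_{p})$: for a form $q'$ obtained by twisting $\langle 1,\dots,1\rangle$, the value $\delta([q'])$ is precisely the local Hasse symbol invariant of $q'$, so that $\delta(\iota_{*}(\pi_{L}))$ computes $h_{p}(q_{L})$ (the base form $\langle 1,\dots,1\rangle$ having trivial invariant). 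Thus both quantities are obstructions to lifting the \emph{same} class $\pi_{L}$, but through two a priori different central extensions of $s_{n}$.

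The heart of the proof is to compare these two covers. Under $\iota$, each transposition is the reflection $s_{e_{i}-e_{j}}$ in the vector $e_{i}-e_{j}$, whose norm for $\langle 1,\dots,1\rangle$ is $q(e_{i}-e_{j})=2$. In the Clifford algebra the lift of such a reflection squares to $q(e_{i}-e_{j})=2$ rather than to $1$; hence the Pin-restriction to $s_{n}$ and the abstract cover $\tilde{s}_{n}$ (where the same lifts are forced to be involutions) differ exactly by rescaling the reflection vectors to unit norm, an operation obstructed over $\Q_{p}$ by the class $(2)\in \Q_{p}^{*}/\Q_{p}^{*2}$. Cupping this with $w_{1}(L)_{p}=\pi_{L}^{*}(\mathrm{sgn})=(d)$ (the discriminant class produced by the sign character) shows that the two obstruction classes differ by the Hasse symbol $(2,d)_{p}$. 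Combining with the previous paragraph yields $w_{2}(L)_{p}=h_{p}(q_{L})\,(2,d)_{p}$.

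The main obstacle is the sign/normalization bookkeeping in this last comparison: one must verify that the abstract cover $\tilde{s}_{n}$ matches the correct choice between the two Pin covers $\mathrm{Pin}_{n}^{\pm}$, so that the correction is genuinely $(2,d)_{p}$ and not $(-1,d)_{p}$ or $(-2,d)_{p}$. I would control this by checking multiplicativity of $\delta$ under the block inclusion $s_{n_{1}}\times\cdots\times s_{n_{g}}\hookrightarrow s_{n}$ coming from the factorization of $p$ in $L$, reducing to the rank-one and rank-two cases (a single transposition, and a pair of adjacent or disjoint transpositions) where the Clifford relations can be matched against the presentation of $\tilde{s}_{n}$ by hand, and then invoking Whitney-type additivity to assemble the general case; alternatively, this is precisely the degree-two component of Serre's computation of the Witt invariant of $\tr_{L/\Q}(x^{2})$, which may be cited directly.
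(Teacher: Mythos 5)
Your proposal is correct in outline, but be aware that the paper does not actually prove this statement: its ``proof'' is the single citation to Serre's \emph{L'invariant de Witt de la forme ${\rm Tr}(x^{2})$} (Th\'eor\`eme 1), and what you have written is essentially a reconstruction of Serre's own argument from that source, in the form systematized by Fr\"ohlich's twisting formalism. The three pillars of your sketch are exactly his: the rational trace form is the twist of the unit form $\langle 1,\dots,1\rangle$ by the cocycle $\pi_{L}$ acting through permutation matrices; $w_{2}(L)$ and the Hasse--Witt invariant are obstructions to lifting through the abstract cover $\tilde{s}_{n}$ and the Clifford (Pin) cover of $O_{n}$ respectively; and the discrepancy between the two covers comes from the reflection vectors $e_{i}-e_{j}$ having norm $2$ (in Serre's packaging, the spinor norm of a transposition is $2$), producing the correction $(2)\cup w_{1}(L)=(2)\cup(d)$ since the sign character of $\pi_{L}$ cuts out $\Q(\sqrt{d})$. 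The one step stated too glibly is the claim that $\delta([q'])$ is \emph{precisely} the Hasse--Witt invariant of the twist: this is true only for the correct choice among the two Pin covers (for instance, when $n=1$ the ${\rm Pin}^{-}$ cover is cyclic of order $4$ and $\delta$ acquires an extra $(-1)\cup w_{1}$ term), which is exactly the normalization issue you flag; your plan of resolving it in ranks one and two, where over $\overline{\Q}$ the elements $(e_{i}-e_{j})/\sqrt{2}$ realize the given presentation of $\tilde{s}_{n}$ ($s_{i}^{2}=1$, disjoint lifts anticommuting) and the failure of Galois equivariance of this splitting is measured by the character $\sigma\mapsto\sigma(\sqrt{2})/\sqrt{2}$ cupped with the sign, is sound and is in substance how the symbol $(2,d)_{p}$ emerges. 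One quibble: invoking the block inclusion attached to the factorization of $p$ in $L$ is unnecessary, since the comparison of the two extensions of $s_{n}$ is universal and independent of $L$ and $p$; the low-rank check plus additivity already suffices. Given that the paper outsources the whole statement, your closing alternative of citing Serre's Th\'eor\`eme 1 directly is precisely the route the author takes.
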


\begin{proof}

See \cite[Th\'eor\`eme 1]{Serre}. 

\end{proof}

\begin{theorem}[Deligne] Keeping the notation as the above, for all finite $p$ \[w_{2}(L)_{p}=\frac{W_{p}(\rho_{L})}{W_{p}({\rm det} (\rho_{L}))}.\] 

\end{theorem}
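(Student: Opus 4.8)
The identity is local in nature: $w_{2}(L)_{p}$ is by definition the restriction of $w_{2}(L)$ to ${\rm Br}_{2}(\Qp)$, and the local factors $W_{p}(\rho_{L})$ and $W_{p}(\det(\rho_{L}))$ depend only on the restriction of $\rho_{L}$ to a decomposition group $G_{\Qp}$. The first step is therefore to discard the global field and prove, for an arbitrary orthogonal representation $\rho$ of $G_{\Qp}$, the local identity
\[
w_{2}(\rho)=\frac{W(\rho)}{W(\det(\rho))}
\]
inside $\{\pm 1\}\cong {\rm Br}_{2}(\Qp)$. Here $\det(\rho)$ is a quadratic character, and a priori the right-hand side is only a complex number of modulus one; part of the content of the theorem is that this quotient in fact lands in $\{\pm 1\}$ and equals the Stiefel--Whitney sign.

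The plan is to show that $\rho \mapsto w_{2}(\rho)$ and $f(\rho):=W(\rho)/W(\det(\rho))$ obey the \emph{same} twisted additivity under orthogonal direct sums, and then to check equality on generators. On the cohomological side the Whitney sum formula gives
\[
w_{2}(\rho_{1}\oplus\rho_{2})=w_{2}(\rho_{1})\,w_{2}(\rho_{2})\,(d_{1},d_{2})_{p},
\]
where $d_{i}=\det(\rho_{i})$ and $(\cdot,\cdot)_{p}$ is the Hilbert symbol. On the analytic side the additivity of the Deligne--Langlands local constants gives $W(\rho_{1}\oplus\rho_{2})=W(\rho_{1})W(\rho_{2})$, while $W(\det(\rho_{1}\oplus\rho_{2}))=W(d_{1}d_{2})$ differs from $W(d_{1})W(d_{2})$ by the cocycle factor $(d_{1},d_{2})_{p}$ governing the root numbers of a product of quadratic characters. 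Dividing, one finds $f(\rho_{1}\oplus\rho_{2})=f(\rho_{1})f(\rho_{2})(d_{1},d_{2})_{p}$, exactly matching the Whitney formula. This is precisely why the quotient $W(\rho)/W(\det(\rho))$, rather than $W(\rho)$ alone, is the correct analytic avatar of $w_{2}$: the $\det$-correction cancels the same anomaly that appears in the Stiefel--Whitney class.

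Consequently $\Psi(\rho):=w_{2}(\rho)/f(\rho)$ is genuinely multiplicative under $\oplus$, hence a homomorphism on the orthogonal representation ring, and it suffices to verify $\Psi\equiv 1$ on generators. By Brauer's induction theorem, together with the inductive behaviour of both local constants and Stiefel--Whitney classes, one reduces to representations induced from characters of finite extensions $E/\Qp$; for these monomial pieces the Stiefel--Whitney class is computed by a transfer/Hilbert-symbol formula and the local constant by Gauss sums via the inductivity of $\epsilon$-factors, and the two explicit answers are matched directly. The genuinely hard input -- and the heart of the theorem -- is the precise evaluation of this epsilon-factor anomaly, namely that the failure of $\rho \mapsto W(\rho)/W(\det(\rho))$ to be multiplicative is measured exactly by the Hasse--Witt cocycle $(d_{1},d_{2})_{p}$ and by nothing finer, together with the explicit computation on the induced blocks. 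I would not reprove this step; I would cite \cite{Deligne} (and the exposition in \cite[\S3]{Tate}) for it, from which the local identity, and hence the theorem, follows.
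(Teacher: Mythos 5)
The paper offers no argument for this statement: its entire ``proof'' is the citation \cite[\S3 Theorem 3]{Tate}, i.e.\ it invokes Deligne's theorem \cite{Deligne} as a black box. Your proposal ultimately rests on exactly the same sources --- you say explicitly that you would cite \cite{Deligne} and \cite[\S3]{Tate} for the hard core --- so in substance you and the paper take the same route; what you add is a (largely faithful) outline of how Deligne's proof is organized, and that outline is sound. The localization step is correct, since $w_{2}(L)_{p}$ is by definition a restriction and $W_{p}$ depends only on $\rho_{L}|_{G_{\Q_{p}}}$; and your matched twisted additivity is accurate on both sides: the Whitney formula gives $w_{2}(\rho_{1}\oplus\rho_{2})=w_{2}(\rho_{1})w_{2}(\rho_{2})(d_{1},d_{2})_{p}$ because $w_{1}(\rho_{i})$ is the square class of $d_{i}=\det(\rho_{i})$ and the cup product on $H^{1}$ is the Hilbert symbol, while the identity $W(d_{1}d_{2})=W(d_{1})W(d_{2})(d_{1},d_{2})_{p}$ for quadratic characters is a genuine (nontrivial, Weil-index/Gauss-sum) fact, so $\Psi=w_{2}/f$ is indeed multiplicative. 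The one place where your sketch glosses a real subtlety is the reduction step: classical Brauer induction operates in the full virtual representation ring, whereas $w_{2}$ --- and hence $\Psi$ --- is defined only on orthogonal representations, so one needs the finer structural fact that virtual orthogonal representations are generated, modulo pieces where the identity is evident, by inductions of quadratic characters and hyperbolic pieces of the form $\mathrm{Ind}(\chi\oplus\chi^{-1})$; this orthogonal refinement of induction, together with the explicit evaluation on the monomial generators (Fr\"ohlich--Queyrut plus Serre-type formulas for $w_{2}$ of an induced character), is precisely the content of \cite{Deligne}. Since you defer to the cited sources for exactly these steps rather than claiming to reprove them, your proposal has no genuine gap; it is simply a more informative version of the paper's bare citation.
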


\begin{proof}

See \cite[\S3 Theorem 3]{Tate}. 

\end{proof}
 
 An immediate consequence of Delingne's and Serre's is a formula relating the Hasse invariant of the trace form and the root numbers:
 
 \begin{corollary}\label{traceroot}
\[  h_{p}(q_{L})=(2,d)_{p}\frac{W_{p}(\rho_{L})}{W_{p}({\rm det}(\rho_{L}))}.\]
 
 \end{corollary}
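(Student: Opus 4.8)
The plan is to obtain the claimed identity as a direct algebraic consequence of the two preceding theorems of Serre and Deligne, with no new input required. First I would recall that all the quantities involved live in a 2-torsion abelian group: the local Stiefel-Whitney invariant $w_{2}(L)_{p}$, the Hasse-Witt invariant $h_{p}(q_{L})$, and the Hasse symbol $(2,d)_{p}$ are all elements of ${\rm Br}_{2}(\Q_{p})$, which under the canonical identification ${\rm Br}_{2}(\Q_{p}) \cong \{\pm 1\}$ are realized as signs. In particular every such element is its own inverse, so $(2,d)_{p}^{2}=1$.

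Next I would take Serre's formula $w_{2}(L)_{p}=h_{p}(q_{L})(2,d)_{p}$ and solve for the Hasse-Witt invariant. Multiplying both sides by $(2,d)_{p}$ and using $(2,d)_{p}^{2}=1$ isolates $h_{p}(q_{L})=w_{2}(L)_{p}(2,d)_{p}$. Finally I would substitute Deligne's identity $w_{2}(L)_{p}=W_{p}(\rho_{L})/W_{p}(\det(\rho_{L}))$ for the Stiefel-Whitney invariant, which yields exactly
\[
h_{p}(q_{L})=(2,d)_{p}\frac{W_{p}(\rho_{L})}{W_{p}(\det(\rho_{L}))},
\]
the desired assertion.

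Since the argument is purely formal, there is no genuine obstacle to overcome; the only point requiring a moment's care is the bookkeeping with inverses when transporting $(2,d)_{p}$ across the equation, and this is harmless precisely because the ambient group is 2-torsion, so the direction in which the Hasse symbol is moved is irrelevant.
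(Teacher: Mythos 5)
Your proof is correct and matches the paper's intent exactly: the paper presents this corollary as an ``immediate consequence'' of the Serre and Deligne theorems, and your argument---equating the two expressions for $w_{2}(L)_{p}$ and moving $(2,d)_{p}$ across using the $2$-torsion of ${\rm Br}_{2}(\Q_{p})$---is precisely the formal computation being invoked.
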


\subsubsection{Background on the integral trace}

The following facts about the integral trace form will be useful in proving our main results. We included them here for the reader's convenience.

\paragraph{{\it The genus}}

The following Jordan decomposition of the local integral trace, for tame extensions, has been obtained by Erez, Morales and Perlis. For details, references and proofs see \cite{Manti3}.

\begin{theorem}\cite[Theorem 0.1]{Manti3}\label{GenusShape} Let $K$ be a degree $n$ number field and let $p$ be an odd prime which is at worst tamely ramified in $K$. Then, there exist $\alpha, \beta \in \Z_{p}^{*}$, and an integer $0<f \leq n$, such that \[q_{K} \otimes \Z_{p} \cong \underbrace{\langle 1,..., 1,\alpha \rangle}_{f}  \bigoplus \langle p \rangle \otimes \underbrace{\langle 1,...,1,\beta \rangle}_{n-f}.\]

\end{theorem}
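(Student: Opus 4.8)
The plan is to reduce the computation to the completions of $K$ at the primes above $p$ and then to compute the local trace form of each tame local extension explicitly. First I would use the standard decomposition $O_{K} \otimes_{\Z} \Z_{p} \cong \prod_{\mathfrak{p} \mid p} O_{\mathfrak{p}}$, where $O_{\mathfrak{p}}$ denotes the ring of integers of the completion $K_{\mathfrak{p}}$. Because $\tr_{K/\Q}$ becomes $\sum_{\mathfrak{p}} \tr_{K_{\mathfrak{p}}/\Q_{p}}$ and elements of distinct factors annihilate one another in the product ring, the pairing $(x,y) \mapsto \tr(xy)$ is orthogonal with respect to this decomposition; hence $q_{K} \otimes \Z_{p} \cong \bigoplus_{\mathfrak{p} \mid p} q_{\mathfrak{p}}$, where $q_{\mathfrak{p}}$ is the integral trace form of $O_{\mathfrak{p}}/\Z_{p}$. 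It then suffices to establish, for each tame local extension $K_{\mathfrak{p}}/\Q_{p}$ of ramification index $e = e_{\mathfrak{p}}$ and residue degree $f_{\mathfrak{p}}$, a Jordan splitting $q_{\mathfrak{p}} \cong U_{\mathfrak{p}} \oplus p\,V_{\mathfrak{p}}$ with $U_{\mathfrak{p}}, V_{\mathfrak{p}}$ unimodular of ranks $f_{\mathfrak{p}}$ and $f_{\mathfrak{p}}(e-1)$ respectively.

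For the local computation I would insert the maximal unramified subextension $\Q_{p} \subseteq F \subseteq K_{\mathfrak{p}}$, so that $F/\Q_{p}$ is unramified of degree $f_{\mathfrak{p}}$ and $K_{\mathfrak{p}}/F$ is totally tamely ramified of degree $e$. By tameness I may write $O_{\mathfrak{p}} = O_{F}[\varpi]$ with $\pi_{F} := \varpi^{e}$ a uniformizer of $F$ and $\{1, \varpi, \dots, \varpi^{e-1}\}$ an $O_{F}$-basis. Reading off the Eisenstein minimal polynomial $X^{e} - \pi_{F}$ (or summing conjugates) gives $\tr_{K_{\mathfrak{p}}/F}(\varpi^{k}) = 0$ for $1 \leq k \leq e-1$, together with $\tr_{K_{\mathfrak{p}}/F}(1) = e$ and $\tr_{K_{\mathfrak{p}}/F}(\varpi^{e}) = e\pi_{F}$. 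Writing $T_{F}$ for the trace form of $O_{F}/\Z_{p}$, which is unimodular since $F/\Q_{p}$ is unramified, and using $\tr_{K_{\mathfrak{p}}/\Q_{p}} = \tr_{F/\Q_{p}} \circ \tr_{K_{\mathfrak{p}}/F}$, I find for $a,b \in O_{F}$ that the bilinear pairing satisfies $B(a\varpi^{i}, b\varpi^{j}) = c_{ij}\,\tr_{F/\Q_{p}}(ab)$, where $c_{ij} = e$ when $i=j=0$, $c_{ij} = e\pi_{F}$ when $i+j=e$ with $i,j \geq 1$, and $c_{ij} = 0$ otherwise (as $0 \leq i+j \leq 2e-2$ forces $i+j \in \{0,e\}$ among the multiples of $e$). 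Consequently $O_{F}\cdot 1$ is orthogonal to the rest and carries the unimodular form $e\,T_{F}$ of rank $f_{\mathfrak{p}}$, while $\bigoplus_{i=1}^{e-1} O_{F}\varpi^{i}$ decomposes into the pairings of $\varpi^{i}$ with $\varpi^{e-i}$, each equal to $e\pi_{F}\,T_{F}$. Since $p \nmid e$ and $\pi_{F}$ is a uniformizer of the unramified field $F$, one has $v_{p}(e\pi_{F}) = 1$, so this second part is exactly $p$ times a unimodular form of rank $f_{\mathfrak{p}}(e-1)$; this is precisely where tameness forces only the scales $p^{0}$ and $p^{1}$ to appear.

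Finally I would reassemble. Taking the orthogonal sum over $\mathfrak{p} \mid p$, the unimodular parts combine into a unimodular form $U$ of rank $f := \sum_{\mathfrak{p} \mid p} f_{\mathfrak{p}}$, with $f > 0$ since at least one prime lies over $p$, and the scaled parts combine into $pV$ with $V$ unimodular of rank $\sum_{\mathfrak{p}} f_{\mathfrak{p}}(e_{\mathfrak{p}}-1) = n - f$. Because $p$ is odd, every unimodular $\Z_{p}$-lattice is diagonalizable and is determined up to isometry by its rank and its determinant class in $\Z_{p}^{*}/(\Z_{p}^{*})^{2}$; hence $U \cong \langle 1, \dots, 1, \alpha\rangle$ and $V \cong \langle 1, \dots, 1, \beta\rangle$ for suitable $\alpha, \beta \in \Z_{p}^{*}$, which gives $q_{K} \otimes \Z_{p} \cong \langle 1, \dots, 1, \alpha\rangle \oplus \langle p\rangle \otimes \langle 1, \dots, 1, \beta\rangle$ as claimed.

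The main obstacle I anticipate is the local trace computation, and in particular verifying that no scale beyond $p^{1}$ occurs: one must handle the totally tamely ramified layer carefully, confirm the monogenicity $O_{\mathfrak{p}} = O_{F}[\varpi]$ and the vanishing pattern of $\tr_{K_{\mathfrak{p}}/F}(\varpi^{k})$, and treat the exceptional self-paired term $\varpi^{e/2}$ when $e$ is even, where $\varpi^{e/2}$ pairs with itself via $e\pi_{F}T_{F}$, which is still $p$ times a unimodular form. The initial reduction and the final diagonalization are routine; the identity $f = \sum_{\mathfrak{p}} f_{\mathfrak{p}}$ emerging from the rank bookkeeping is a useful consistency check against $v_{p}({\rm disc}(K)) = \sum_{\mathfrak{p}} f_{\mathfrak{p}}(e_{\mathfrak{p}} - 1) = n - f$.
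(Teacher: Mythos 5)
Your proposal is correct, and it coincides with how this result is actually established: the paper itself gives no proof, deferring to \cite[Theorem 0.1]{Manti3} (building on the Erez--Morales--Perlis computation of the tame local trace form), and that argument is exactly yours --- split $O_{K}\otimes\Z_{p}$ over the primes above $p$, insert the maximal unramified subextension $F$ with Eisenstein generator $\varpi^{e}=\pi_{F}$, use the vanishing $\tr_{K_{\mathfrak{p}}/F}(\varpi^{k})=0$ for $e\nmid k$ to see only scales $p^{0}$ and $p^{1}$ occur, and diagonalize the odd unimodular pieces by rank and determinant class. Your handling of the self-paired term $\varpi^{e/2}$ and the bookkeeping $f=\sum_{\mathfrak{p}}f_{\mathfrak{p}}$, $n-f=v_{p}({\rm disc}(K))$ are both sound, so there is nothing to add.
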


\begin{corollary}\label{SameGenusTameSameRational}
Let $K,L$ be two tamely ramified number fields of the same discriminant and signature. Then, the integral trace forms $q_{K}$ and  $q_{L}$ are in the same genus if and only if $\displaystyle h_{p}(q_{K})=h_{p}(q_{L})$ for every odd prime $p$. 
\end{corollary}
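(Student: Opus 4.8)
The plan is to prove Corollary~\ref{SameGenusTameSameRational} by invoking the local-global principle for the genus of a quadratic form, together with the explicit Jordan decomposition of Theorem~\ref{GenusShape}. Recall that two $\Z$-lattices lie in the same genus precisely when they are isometric over $\Z_p$ for every prime $p$ (including the archimedean place, where isometry over $\R$ means equal signature). Since $K$ and $L$ are assumed to have the same signature, the rational trace forms $q_K\otimes\Q$ and $q_L\otimes\Q$ have the same signature, so the two lattices are automatically isometric over $\R$; thus the genus condition reduces to checking $\Z_p$-isometry at all finite primes $p$.

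First I would dispose of the even prime and the unramified odd primes. At a finite prime $p$ not dividing $\d(K)=\d(L)$, the extension is unramified, and the standard description of the local integral trace (or simply Theorem~\ref{GenusShape} with $f=n$) shows that $q_K\otimes\Z_p$ is unimodular and depends only on the discriminant valuation and the residue behaviour that is common to $K$ and $L$; hence these local forms are isometric and need no further attention. For $p=2$, one uses that $K$ and $L$ are tame with the same discriminant: the $2$-adic integral trace form of a number field tamely ramified away from $2$ is determined by the discriminant and degree, so $q_K\otimes\Z_2\cong q_L\otimes\Z_2$ holds automatically. This is the place where I would lean on the results collected in \cite{Manti3}; I expect the $2$-adic comparison to be the most delicate bookkeeping, since the classification of quadratic forms over $\Z_2$ is genuinely more involved and the Hasse invariant alone does not control it.

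The core of the argument is the odd ramified primes, and here the explicit shape from Theorem~\ref{GenusShape} does the work. For an odd prime $p$ tamely ramified in $K$, we have
\[
q_K\otimes\Z_p \cong \langle 1,\dots,1,\alpha_K\rangle \oplus \langle p\rangle\otimes\langle 1,\dots,1,\beta_K\rangle,
\]
with the unimodular block of rank $f_K$ and the $p$-scaled block of rank $n-f_K$, and similarly for $L$. Over $\Z_p$ with $p$ odd, a form of this type is classified up to isometry by the ranks of the two Jordan blocks and by the square classes of the determinants of each block, equivalently by the rank $f$, the discriminant valuation, and the unit square classes $\alpha,\beta\bmod(\Z_p^*)^2$. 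Since $\d(K)=\d(L)$, the $p$-valuation $n-f$ of the discriminant agrees, forcing $f_K=f_L$. The two forms therefore agree in rank and block structure, and the remaining data is exactly the pair of square classes of $\alpha$ and $\beta$, which together determine (and are determined by) the local Hasse--Witt invariant $h_p(q_K)$ for a form of this fixed Jordan shape. Thus $q_K\otimes\Z_p\cong q_L\otimes\Z_p$ if and only if $h_p(q_K)=h_p(q_L)$.

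Assembling the pieces gives the equivalence: the genus coincides iff the local forms are isometric at every place; the archimedean place is handled by the equal signature, the even and unramified odd primes are handled automatically by tameness and equal discriminant, and at each odd ramified prime the isometry is equivalent to equality of the local Hasse invariant by the paragraph above. Since $h_p(q_K)=h_p(q_L)$ is trivially true at the primes where it is forced (unramified odd $p$), the only substantive conditions are at the odd ramified primes, and the stated criterion ``$h_p(q_K)=h_p(q_L)$ for every odd prime $p$'' packages exactly these. The main obstacle I anticipate is not the odd-prime analysis, which is essentially a direct reading of Theorem~\ref{GenusShape}, but rather making airtight the claim that tameness plus equal discriminant already pins down the $2$-adic form; I would address this by citing the relevant classification in \cite{Manti3} rather than recomputing the dyadic Jordan splitting by hand.
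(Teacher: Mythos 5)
Your proposal is correct and follows essentially the same route as the paper: the forward direction is formal, the archimedean place is handled by equal signature, the dyadic form is handled by citation (the paper invokes \cite[Proposition 2.7]{Manti2} rather than \cite{Manti3}), and at odd primes you use Theorem~\ref{GenusShape} together with the observation that the block ranks are pinned by the common discriminant and the residual square-class ambiguity in $\alpha,\beta$ is detected exactly by $h_{p}$ --- which is precisely the content of \cite[Lemma 2.1]{Manti2} that the paper cites as a black box and you unpack by hand. One small imprecision: at unramified odd primes the ``residue behaviour'' of $K$ and $L$ need not be common (decomposition types can differ away from the ramified set); the correct justification, which your argument effectively uses anyway, is that unimodular $\Z_{p}$-forms at odd $p$ are classified by rank and unit determinant, and these are shared since equal signature forces equal degree and the common discriminant is a $p$-adic unit there.
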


\begin{proof} If $q_{K}$ and  $q_{L}$ are in the same genus then clearly they have the same local symbols at every prime. Conversely, let $p$ be an odd prime and suppose that $\displaystyle h_{p}(q_{K})=h_{p}(q_{L})$. Thanks to Theorem \ref{GenusShape} we can apply  \cite[Lemma 2.1]{Manti2} to the forms $q_{K} \otimes \Z_{p}, q_{L} \otimes \Z_{p}$ and conclude that \[q_{K} \otimes \Z_{p} \cong q_{L} \otimes \Z_{p}.\] Since $q_{K} \otimes \Z_{2} \cong q_{L} \otimes \Z_{2}$ (see \cite[Proposition 2.7]{Manti2}), and the fields have the same signature, the result follows.
\end{proof}

\paragraph{{\it The spinor genus}} For details, references and proofs about the spinor genus of the integral trace see \cite{Manti4}. 

\begin{theorem}\cite[Theorem 2.12]{Manti4}\label{SameSpinSameGen}
Let $K$ be a number field of degree at least $3$. Then, the genus of integral
trace form $q_{K}$ contains only one spinor genus.
\end{theorem}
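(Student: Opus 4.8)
The plan is to translate the statement into the idelic count of spinor genera and then reduce it to a local spinor-norm computation. Recall the classical description: for a nondegenerate $\Z$-lattice $(L,q)$ of rank $n\ge 3$ in a quadratic space $V$ over $\Q$, the number of proper spinor genera in the genus of $L$ equals the index
\[
\bigl[\, J_{\Q} : \Q^{*}\prod_{v}\theta_{v}\bigl(O^{+}(L_{v})\bigr)\,\bigr],
\]
where $v$ runs over all places of $\Q$, $O^{+}(L_{v})$ is the group of proper units of $L_{v}$, $\theta_{v}\colon O^{+}(V_{v})\to\Q_{v}^{*}/(\Q_{v}^{*})^{2}$ is the local spinor norm, $J_{\Q}$ denotes the idele group and $\Q^{*}$ is embedded diagonally. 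Applying this to $L=q_{K}$, it suffices to prove that this index equals $1$, equivalently that $\Q^{*}\prod_{v}\theta_{v}\bigl(O^{+}(L_{v})\bigr)=J_{\Q}$; a single proper spinor genus a fortiori gives a single spinor genus.

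Since $\Q$ has narrow class number $1$, every idele is, after multiplication by a rational, a unit at every finite place and positive at the archimedean place; that is, $J_{\Q}=\Q^{*}\cdot\bigl(\R_{>0}\times\prod_{p}\Z_{p}^{*}\bigr)$. Hence the index is $1$ as soon as $\theta_{\infty}\bigl(O^{+}(V_{\infty})\bigr)\supseteq\R_{>0}$ and $\theta_{p}\bigl(O^{+}(L_{p})\bigr)\supseteq\Z_{p}^{*}$ for every finite prime $p$. The archimedean condition always holds: if $q_{K}$ is definite (i.e.\ $K$ is totally real) then every product of two reflections has positive spinor norm and the image is exactly $\R_{>0}$, while if $q_{K}$ is indefinite the image is all of $\R^{*}$; in either case it contains $\R_{>0}$.

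It remains to verify $\theta_{p}\bigl(O^{+}(L_{p})\bigr)\supseteq\Z_{p}^{*}$ at each finite $p$. For an odd prime that is at worst tamely ramified I would invoke Theorem \ref{GenusShape}, which writes $q_{K}\otimes\Z_{p}\cong\langle 1,\dots,1,\alpha\rangle\perp p\langle 1,\dots,1,\beta\rangle$ as the orthogonal sum of a unimodular block of rank $f$ and a $p$-modular block of rank $n-f$. Because $n\ge 3$, one of these two blocks has rank $\ge 2$. A (scaled) unimodular $\Z_{p}$-lattice of rank $\ge 2$ over an odd prime represents two unit values differing by a nonsquare unit — either directly, when its two diagonal entries lie in distinct square classes, or because $x^{2}+y^{2}$ is universal for $p$-adic units otherwise — so the reflections it supports generate a spinor-norm group containing $\Z_{p}^{*}$. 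The point exploited here is that the tame Jordan decomposition has only two scales, which excludes the thin shape $\langle 1,p,p^{2}\rangle$ whose spinor-norm group famously omits the nonsquare units.

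The main obstacle is the same local claim at the prime $2$ and at wildly ramified odd primes, where Theorem \ref{GenusShape} gives no information and the Jordan structure of the integral trace form is far more intricate. There I would analyze the local trace form directly, using the rank hypothesis $n\ge 3$ to locate a Jordan component large enough that its proper automorphisms surject onto $\Z_{p}^{*}/(\Z_{p}^{*})^{2}$ under the spinor norm; the crux — and the bulk of the technical work — is to show that the integral trace form of a field of degree $\ge 3$ never degenerates into a string of rank-one Jordan blocks at a bad prime, so that the unit-representing argument of the tame case always has an analogue.
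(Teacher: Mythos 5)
Note first that the paper itself contains no proof of this statement: Theorem \ref{SameSpinSameGen} is imported verbatim from \cite[Theorem 2.12]{Manti4}, so your proposal can only be measured against that external reference, whose content is precisely the local analysis you defer. Your structural reduction is the standard and correct one: the idelic count of proper spinor genera over $\Q$, the observation that $J_{\Q}=\Q^{*}\cdot(\R_{>0}\times\prod_{p}\Z_{p}^{*})$ reduces everything to the classical sufficiency criterion $\theta_{p}\bigl(O^{+}(q_{K}\otimes\Z_{p})\bigr)\supseteq\Z_{p}^{*}$ at all finite $p$ (O'Meara 102:9), and the fact that one proper spinor genus forces one spinor genus. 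Your verification at odd tamely ramified primes is also sound: Theorem \ref{GenusShape} gives a two-scale Jordan splitting, $n\ge 3$ forces a block of rank $\ge 2$, and a (scaled) binary unimodular $\Z_{p}$-lattice at odd $p$ represents units in both square classes, so reflections generate spinor norms covering $\Z_{p}^{*}$.

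The genuine gap is that the theorem carries no ramification hypothesis, while your argument covers only odd tame primes: the dyadic prime and wildly ramified odd primes are left as a declared intention, and that is where the bulk of \cite{Manti4} lives (note that Theorem \ref{GenusShape} is stated only for odd $p$, so even a field unramified at $2$ is not covered by what you have written). Worse, the repair you sketch --- rule out strings of rank-one Jordan blocks and rerun the unit-representing argument --- is provably insufficient at $p=2$, where spinor norms are not controlled by rank alone. Concretely, for $L=\langle 1,1\rangle$ over $\Z_{2}$ the values $x^{2}+y^{2}$ of primitive vectors lie only in the square classes of $1,5,2,10$ (a sum of two squares is never $3$ or $7$ mod $8$), and $O(L)$ is generated by reflections in such vectors, so $\theta\bigl(O^{+}(L)\bigr)=\{1,2,5,10\}\cdot(\Q_{2}^{*})^{2}$, which omits the unit class of $3$: a rank-two unimodular component at $2$ does \emph{not} guarantee $\theta\supseteq\Z_{2}^{*}$. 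Closing the dyadic case therefore requires the precise $2$-adic structure of the integral trace form together with the dyadic spinor-norm computations of Earnest and Hsia, and the wild odd case needs a substitute for Theorem \ref{GenusShape}, since without tameness the Jordan splitting may have many scales and shapes of the type $\langle 1,p,p^{2}\rangle$ are not excluded a priori. As written, your proposal establishes the local criterion only at odd tame primes and does not prove the theorem.
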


The main application of the spinor genus is that it gives a way to determine when two number fields with ramification at infinity have isometric integral traces.

\begin{proposition}\label{ultimaprop}
Let $K, L$ be two non-totally real number fields. Then the forms $q_{K}$ and $q_{L}$ are in the same spinor genus if and only if they are isometric.
\end{proposition}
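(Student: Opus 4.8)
The plan is to deduce the statement from the strong approximation theorem for the spin group (Eichler's theorem), which asserts that an indefinite integral quadratic form of rank at least $3$ contains exactly one isometry class in each of its spinor genera. So the first step is to check that the hypothesis of \emph{non-totally real} is precisely what makes $q_{K}$ and $q_{L}$ indefinite. Writing $r_{1}$ and $r_{2}$ for the numbers of real and complex places of $K$, the trace form over $\R$ splits as $r_{1}$ rank-one positive definite pieces coming from the real embeddings together with, for each complex place, a copy of $z \mapsto 2\,\mathrm{Re}(z^{2})$, which has signature $(1,1)$. Hence $q_{K}$ has signature $(r_{1}+r_{2},\, r_{2})$, and $K$ being non-totally real means $r_{2}\geq 1$, so both the positive and the negative indices are strictly positive and $q_{K}$ is indefinite; the same holds for $q_{L}$.

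The \emph{if} direction is immediate: isometric lattices lie in the same spinor genus, because the definition of the spinor genus allows a global isometry of the ambient quadratic space, so two globally isometric forms are trivially spinor-equivalent. For the \emph{only if} direction I would first note that being in the same spinor genus forces $q_{K}$ and $q_{L}$ to lie in the same genus, hence to be rationally isometric by Hasse--Minkowski, and in particular to be defined on one common quadratic space of rank $n=[K:\Q]$. When $n\geq 3$ the forms are indefinite of rank at least $3$, so Eichler's theorem (strong approximation; see the spinor-genus background in \cite{Manti4}) applies and shows that their common spinor genus consists of a single isometry class, whence $q_{K}\cong q_{L}$.

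It then remains to dispose of the low-rank case $n=2$, that is, $K$ and $L$ imaginary quadratic, where strong approximation does not apply. Here I would argue directly: the determinant of the trace form equals the field discriminant, and the determinant is an isometry invariant, hence constant on a genus; so $q_{K}$ and $q_{L}$ in the same spinor genus (a fortiori the same genus) satisfy $\mathrm{disc}(K)=\mathrm{disc}(L)$. Since a quadratic field is determined by its discriminant, this gives $K\cong L$ and therefore isometric trace forms.

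The main obstacle is exactly this rank boundary: the whole argument rests on strong approximation, which fails in rank $2$, so imaginary quadratic fields must be handled by hand (fortunately they are rigid enough that the discriminant alone pins them down). A secondary point to be careful about is the proper-versus-improper distinction in the definition of the spinor genus; I would phrase everything using the full orthogonal group, so that ``spinor genus'' already absorbs improper isometries and its coincidence with the ordinary isometry class in the indefinite rank $\geq 3$ range is clean.
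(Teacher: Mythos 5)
Your proposal is correct and follows essentially the same route as the paper: reduce to degree $n \geq 3$ (the paper dispatches low degree by noting that degree and discriminant are spinor-class invariants, which is exactly your imaginary quadratic argument made explicit), observe that non-totally real gives indefinite signature $(r_1+r_2, r_2)$ with $r_2 \geq 1$, and invoke Eichler's theorem that spinor class and isometry class coincide for indefinite forms of rank at least $3$. Your extra care about the $n=2$ boundary and the proper-versus-improper convention is sound but does not change the substance.
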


\begin{proof}
Since the discriminant and degree of a number field are invariants of the spinor (resp. isometry) class of its integral trace form, we may assume that both fields have degree $n \ge 3$. Since the fields are non-totally real, the forms $q_K$ and $q_L$ are indefinite and of dimension at least $3$.  By Eichler's Theorem (see \cite{eichler}) the spinor class and isometry class coincide for indefinite forms of dimension bigger than $2$, hence the result.
\end{proof}

\subsection{From root numbers to the integral trace}
 We now have all we need to give proofs to Theorems \ref{samerootsametrace} and \ref{AEimpliesSameTrace}.

\begin{lemma}\label{samerootdisc}
Let $K,L$ be number fields of the same discriminant. Then, for all primes $p$ \[W_{p}({\rm det} (\rho_{K}))=W_{p}({\rm det} (\rho_{L})).\]
\end{lemma}

\begin{proof}
Since $\rho_{K}$ is an orthogonal representation, the one dimensional representation \[\det (\rho_{K}) \to \pm 1\] factors through an injective morphism \[\delta: \textrm{Gal} (\Q({\sqrt{d}})/\Q) \to \pm 1,\] where $d \in \Q^*/(\Q^*)^{2}$ depends on $K$. Hence, if $\sigma \in G_{\Q}$, we have that $\det (\rho_{K})(\sigma)=\frac{\sigma(\sqrt{d})}{\sqrt{d}}.$ On the other hand, a calculation shows that $d ={\rm disc}(K)$ (see for example \cite[Pg 427, second paragraph]{Perlis}). Since $K$ and $L$ have the same discriminant, the representations $\det (\rho_{K})$ and $\det (\rho_{L})$ coincide hence so do their root numbers.
\end{proof}

\begin{theorem}\label{samerootsametrace}

Let $K,L$ be two tamely ramified number fields of the same discriminant and signature. Then, the integral trace forms $q_{K}$ and  $q_{L}$ are in the same spinor genus if and only if $\displaystyle W_{p}(\rho_{K})=W_{p}(\rho_{L})$ for every odd prime $p$ that divides ${\rm disc}(K)$. In particular, for a tame non-totally real number field, the integral trace form is completely determined by the local root numbers of the field.
\end{theorem}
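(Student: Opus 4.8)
The plan is to prove the equivalence by reducing the spinor-genus condition to a comparison of local Hasse invariants, and then to rewrite each such invariant as a ratio of root numbers via Corollary \ref{traceroot}. All of the analytic input has already been isolated, so the argument is a chain of equivalences.

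First I would eliminate the spinor genus in favour of the ordinary genus. As $K$ and $L$ have equal discriminant and signature they have a common degree $n$, and I may assume $n \ge 3$; the cases $n \le 2$ are trivial, since in those degrees a field's discriminant already determines it, and hence its integral trace up to isometry. For $n \ge 3$, Theorem \ref{SameSpinSameGen} says each genus of $q_{K}$ and of $q_{L}$ carries a single spinor genus, so that ``same spinor genus'' and ``same genus'' are literally the same relation on forms of this dimension. The left-hand side of the asserted equivalence is therefore equivalent to $q_{K}$ and $q_{L}$ lying in the same genus.

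Next I would apply Corollary \ref{SameGenusTameSameRational}: under the standing hypotheses the two forms share a genus if and only if $h_{p}(q_{K}) = h_{p}(q_{L})$ for every odd prime $p$. I would then check that this family of conditions collapses to the odd primes dividing $d := {\rm disc}(K) = {\rm disc}(L)$. Indeed, for odd $p \nmid d$ the representation $\rho_{K}$ is unramified at $p$, so $W_{p}(\rho_{K}) = W_{p}(\det \rho_{K}) = 1$, and the Hilbert symbol $(2,d)_{p}$ is trivial because $2$ and $d$ are $p$-adic units; Corollary \ref{traceroot} then gives $h_{p}(q_{K}) = 1$, and likewise $h_{p}(q_{L}) = 1$. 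Hence the genus condition is equivalent to $h_{p}(q_{K}) = h_{p}(q_{L})$ for the odd primes $p \mid d$ alone.

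Finally I would translate each surviving equality into the corresponding root-number statement. Writing Corollary \ref{traceroot} for both fields and using that they share the discriminant $d$ (so the factor $(2,d)_{p}$ cancels), together with Lemma \ref{samerootdisc} (so the denominators $W_{p}(\det \rho_{K})$ and $W_{p}(\det \rho_{L})$ agree), the equality $h_{p}(q_{K}) = h_{p}(q_{L})$ reduces to $W_{p}(\rho_{K}) = W_{p}(\rho_{L})$. This establishes the stated equivalence. For the final assertion, the hypothesis that the field is non-totally real lets Proposition \ref{ultimaprop} identify the spinor class with the isometry class, so that the integral trace form is pinned down completely by the root numbers at the odd ramified primes. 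The one genuinely nontrivial ingredient is Theorem \ref{SameSpinSameGen}, which is what makes the passage from spinor genus to genus legitimate; granting it, the points requiring care are the reduction to ramified primes and the small-degree cases, both of which are routine.
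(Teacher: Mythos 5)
Your proposal is correct and follows essentially the same route as the paper's proof: Theorem \ref{SameSpinSameGen} plus Corollary \ref{SameGenusTameSameRational} to convert the spinor-genus condition into equality of Hasse invariants at odd primes, then Corollary \ref{traceroot} with Lemma \ref{samerootdisc} to turn those into root-number equalities, and Proposition \ref{ultimaprop} for the final assertion. You merely spell out two points the paper leaves implicit --- the triviality of the degree $\le 2$ cases and the verification that $h_{p}(q_{K})=h_{p}(q_{L})=1$ at odd unramified primes --- both correctly.
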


\begin{proof}
We may assume that the fields have degree at least $3$. Thanks to Corollary \ref{traceroot} and Lemma \ref{samerootdisc} we have that for a prime $p$ \[ W_{p}(\rho_{K})=W_{p}(\rho_{L}) \ \mbox{if and only if} \ h_{p}(q_{K})=h_{p}(q_{L}).\]  On the other hand, it follows from Theorem \ref{SameSpinSameGen} and Corollary \ref{SameGenusTameSameRational} that the forms $q_{K}$ and  $q_{L}$ are in the same spinor genus if and only if $h_{p}(q_{K})=h_{p}(q_{L})$ for every odd prime $p$. Since  $h_{p}(q_{K})=h_{p}(q_{L})=1$ for unramified primes, the result follows. The last assertion in the theorem follows from Proposition \ref{ultimaprop} \end{proof}

As an immediate consequence of Theorem \ref{samerootsametrace}, we obtain a generalization of Perlis' result \cite[Corollary 1]{Perlis} to the integral trace.

\begin{theorem}\label{AEimpliesSameTrace}
Let $K,L$ be two non-totally real tamely ramified arithmetically equivalent number fields. Then, the integral trace forms $q_{K}$ and  $q_{L}$ are isometric.
\end{theorem}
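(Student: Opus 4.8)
The plan is to derive this theorem as a direct corollary of Theorem~\ref{samerootsametrace}, by observing that arithmetic equivalence is a far stronger hypothesis than the one appearing there and supplies every condition it requires. Concretely, Theorem~\ref{samerootsametrace} needs three inputs in the non-totally real tame case: that $K$ and $L$ share the same discriminant, the same signature, and the same local root numbers $W_{p}(\rho_{K})=W_{p}(\rho_{L})$ at every odd prime $p$ dividing ${\rm disc}(K)$. I will verify all three from the single assumption $\zeta_{K}=\zeta_{L}$.

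The first two are classical: the discriminant and the signature of a number field are determined by its Dedekind zeta function (see \cite{Perlis1}), so arithmetic equivalence immediately yields ${\rm disc}(K)={\rm disc}(L)$ and matching signatures. The essential step is the third. Here I would use that $\zeta_{K}=L(s,\rho_{K})$ and $\zeta_{L}=L(s,\rho_{L})$, so that arithmetic equivalence reads $L(s,\rho_{K})=L(s,\rho_{L})$. An Artin $L$-function determines its representation up to isomorphism: equality of the unramified Euler factors gives equality of the characteristic polynomials of $\rho_{K}({\rm Frob}_{p})$ and $\rho_{L}({\rm Frob}_{p})$ for almost all $p$, hence equality of their traces; by Chebotarev's density theorem the Frobenius classes are dense, so the characters of $\rho_{K}$ and $\rho_{L}$ agree, and since both representations are semisimple with finite image we conclude $\rho_{K}\cong\rho_{L}$ as representations of $G_{\Q}$.

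Once $\rho_{K}\cong\rho_{L}$ holds globally, restriction to a decomposition group $G_{\Q_{p}}$ gives $\rho_{K}|_{G_{\Q_{p}}}\cong\rho_{L}|_{G_{\Q_{p}}}$ at every place; since the local root number $W_{p}(\rho)$ depends only on the local isomorphism class of $\rho$, it follows that $W_{p}(\rho_{K})=W_{p}(\rho_{L})$ for all $p$, in particular for all odd $p\mid{\rm disc}(K)$. All hypotheses of Theorem~\ref{samerootsametrace} are thus in force, so $q_{K}$ and $q_{L}$ lie in the same spinor genus; and because $K$ and $L$ are non-totally real, Proposition~\ref{ultimaprop} upgrades this to an isometry $q_{K}\cong q_{L}$. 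I do not expect a genuine obstacle here: the only point demanding care is the justification that $\zeta_{K}=\zeta_{L}$ forces $\rho_{K}\cong\rho_{L}$ (Gassmann equivalence, via linear independence of characters), together with the accompanying observation that local root numbers are invariants of the local isomorphism class, so that the global isomorphism propagates to equality of every local factor.
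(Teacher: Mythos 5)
Your proposal is correct and follows the same route as the paper: the paper's proof of Theorem~\ref{AEimpliesSameTrace} is a one-line reduction to Theorem~\ref{samerootsametrace} and Proposition~\ref{ultimaprop}, using exactly the facts you verify, namely that arithmetically equivalent fields share discriminant, signature, and local root numbers. Your expanded justification of the third fact (Chebotarev plus the local nature of root numbers, i.e.\ $\zeta_{K}=\zeta_{L}$ forces $\rho_{K}\cong\rho_{L}$, hence agreement of all local restrictions) is the argument the paper itself sketches in the introduction when discussing Perlis' theorem, so you have simply made explicit what the paper leaves implicit.
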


\begin{proof}
Since arithmetically equivalent number fields share discriminants, signatures and local root numbers the result follows from Theorem \ref{samerootsametrace} and Proposition \ref{ultimaprop}.
\end{proof}

\begin{remark}\label{necesario}
In contrast to Perlis' result on the rational trace, arithmetic equivalence does not imply isometry between integral traces. In fact, as the following examples show, the ramification conditions imposed on the above theorem are not only sufficient but also necessary. Example 2.3 of \cite{Manti1} shows that the tameness condition in Theorem \ref{AEimpliesSameTrace} is necessary. On the other hand, if 
 $F$ and $L$  are the number fields defined by the polynomials
\begin{align*}
p_F&=x^7 - 2x^6 - 47x^5 + 25x^4 + 755x^3 + 496x^2 - 3782x - 5217,\\
p_L&=x^7 - 2x^6 - 47x^5 - 8x^4 + 480x^3 + 793x^2 + 233x + 19,
\end{align*}
it can be shown, as in the proof of \cite[Proposition 2.7]{Manti1}, that $F$ and $L$ are non-isomorphic arithmetically equivalent number fields. Furthermore, they are totally real and their common discriminant is equal to $5^2\cdot11^6\cdot19^4$ hence they are tamely ramified. A calculation in MAGMA shows that their integral traces are not equivalent. This example shows that the condition at infinity in Theorem \ref{AEimpliesSameTrace} is necessary.
\end{remark}

\subsection{Weak arithmetic equivalence}

After generalizing Perlis' work on arithmetic equivalence to the integral trace form, we are ready to go further by using {\it weak arithmetic equivalence}. To make statements about prime decomposition as general as possible, see for example \cite[Remark 2.6]{Manti2}, we use Conway's notation ($p=-1$) for the prime at infinity (\cite[Chapter 15, \S4]{conway}). 

\begin{definition}\label{LocArithEqui}
Let $K,L$ be two number fields. We say that $K$ and $L$ are weakly arithmetically equivalent if and only if 

\begin{itemize}

\item A prime $p$ ramifies in $K$ if and only if $p$ ramifies in $L$,
\item  $L_{p}(s,\rho_{K})=L_{p}(s,\rho_{L})$ for $p \in \{ p: p \mid {\rm disc}(K) \} \cup \{-1\}.$   
\end{itemize}
\end{definition}

\begin{remark} The second condition above should be interpreted as an equality between $L_{p}$-factors at every ramified prime. Of course there are fields in which $p=-1$ does not ramify, but in such cases $L_{-1}(s,\rho_{K})=L_{-1}(s,\rho_{L})$ is equivalent to $[K:\Q]=[L:\Q]$. Hence an equivalent statement to Definition \ref{LocArithEqui} is that $K$ and $L$ have same degree, same ramified primes and same local $p$-factors at such a primes.
\end{remark}

Recall that the {\it decomposition type} of a rational prime $p$ in a number field $K$ is the sequence $(f_{1},...,f_{g})$ consisting of the residue degrees $f_{i}$ of the primes in $K$ lying over $p$ written in increasing order: $f_{1}\leq ...\leq f_{g}.$

\begin{lemma}\label{RamtypeLAE}
Let $K,L$ be number fields and let $S_{K,L}$ be the set of primes $p$ that are ramified in either $K$ or $L$. Then, $K$ and $L$ are weakly arithmetically equivalent if and only if $K$ and $L$ have the same degree and for all $p \in S_{K,L} \cup \{-1\}$, we have that $p$ has the same decomposition type in $K$ and $L$.

\end{lemma}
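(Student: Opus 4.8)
The plan is to show that at every place the two sides of the equivalence are controlled by the same datum --- the multiset of residue degrees --- and that this multiset is faithfully recorded by the local $L$-factor. The crux is the following reading of the Euler factor recalled in \S\ref{Dedekind}: if $p$ is a finite prime whose decomposition type in $K$ is $(f_1,\dots,f_g)$, then setting $u=p^{-s}$ the reciprocal of $L_p(s,\rho_K)$ is the polynomial $\prod_{i=1}^{g}(1-u^{f_i})$. I would first prove that this polynomial determines, and is determined by, the multiset $\{f_1,\dots,f_g\}$. Using $1-u^{f}=\prod_{d\mid f}\Phi_d(u)$ and unique factorization in $\Q[u]$, the exponent with which the cyclotomic polynomial $\Phi_d$ occurs in $\prod_i(1-u^{f_i})$ equals $c_d:=\#\{i:d\mid f_i\}$; these integers are therefore read off from $L_p(s,\rho_K)$, and M\"obius inversion recovers the multiplicities $\#\{i:f_i=f\}=\sum_{k\ge 1}\mu(k)\,c_{fk}$. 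Hence, for finite $p$, $L_p(s,\rho_K)=L_p(s,\rho_L)$ if and only if $p$ has the same decomposition type in $K$ and $L$. At $p=-1$ the parallel statement is that $L_{-1}(s,\rho_K)=\Gamma_{\mathbb{R}}^{r_1}\Gamma_{\mathbb{C}}^{r_2}$ determines the signature $(r_1,r_2)$: the orders of the poles of the product at $s=0$ and $s=-1$ give $r_1+r_2$ and $r_2$, so $(r_1,r_2)$ is recovered, and this signature is exactly the decomposition type at $-1$; in particular equality of the infinite factors forces $[K:\Q]=[L:\Q]$.

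Granting this dictionary, the forward implication is bookkeeping. If $K$ and $L$ are weakly arithmetically equivalent, then equality of the infinite factors (Definition \ref{LocArithEqui} always includes $p=-1$) gives the same signature, hence the same degree and the same decomposition type at $-1$. For a finite $p\in S_{K,L}$ the first condition of Definition \ref{LocArithEqui} shows $p$ ramifies in both fields, so $p\mid{\rm disc}(K)$, and the second condition then gives $L_p(s,\rho_K)=L_p(s,\rho_L)$; by the dictionary $p$ has the same decomposition type in both fields. Thus the decomposition types agree at every $p\in S_{K,L}\cup\{-1\}$, which is the right-hand side.

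For the converse I would assume equal degrees and equal decomposition types at every $p\in S_{K,L}\cup\{-1\}$ and recover the two defining conditions. The only substantive point is reconstructing ``same set of ramified primes'' from residue-degree data, and here the common degree $n$ is essential: since $\sum_i e_i f_i=n$, a prime is unramified precisely when $\sum_i f_i=n$. Hence for $p\in S_{K,L}$ equal decomposition types give $\sum_i f_i^{(K)}=\sum_i f_i^{(L)}$, so $p$ is unramified in $K$ if and only if it is unramified in $L$; as $p\in S_{K,L}$ is ramified in at least one field it is then ramified in both, while primes outside $S_{K,L}$ are unramified in both by definition. This is the first condition. For the second, the primes dividing ${\rm disc}(K)$ are exactly the finite primes of $S_{K,L}$, and on $S_{K,L}\cup\{-1\}$ equality of decomposition types yields equality of the local factors by the dictionary.

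The step I expect to require the most care is the faithful encoding of the decomposition type in $L_p$, i.e. recovering the multiset $\{f_i\}$ from $\prod_i(1-u^{f_i})$; the cyclotomic factorization together with M\"obius inversion settles it, and it is the only place where more than routine ramification bookkeeping enters. The one auxiliary fact I would invoke without proof is the standard statement that the finite ramified primes of a number field are precisely the prime divisors of its discriminant, which is what lets me identify $\{p:p\mid{\rm disc}(K)\}$ with the finite part of $S_{K,L}$ once the ramified sets are known to coincide.
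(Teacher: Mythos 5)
Your proof is correct. Note, however, that the paper does not actually argue this lemma itself: its entire ``proof'' is a citation to the implication vi) $\Rightarrow$ ii) of \cite[III, \S1 Theorem 1.3]{Klingen}, so your write-up supplies in full what the paper delegates to the literature. The substance in both cases is the same dictionary: the local Euler factor at a finite $p$ determines and is determined by the multiset of residue degrees, and the Gamma factor at $-1$ determines and is determined by the signature. Your mechanism for the finite primes --- factoring $\prod_i\bigl(1-u^{f_i}\bigr)$ into cyclotomic polynomials $\Phi_d$, reading off $c_d=\#\{i: d\mid f_i\}$ from unique factorization in $\Q[u]$, and recovering the multiplicities by M\"obius inversion $\#\{i:f_i=f\}=\sum_{k\ge 1}\mu(k)c_{fk}$ --- is a clean closed-form alternative to the argument usually given in Klingen's book, which inducts on the smallest residue degree appearing in the power-series expansion; the two are interchangeable, and yours has the mild advantage of an explicit inversion formula. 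Your treatment of the infinite place via pole orders at $s=0$ and $s=-1$ (orders $r_1+r_2$ and $r_2$, since $\Gamma_{\mathbb{R}}$ has poles only at even nonpositive integers) is also correct and is exactly what makes the degree recoverable from $L_{-1}$ alone, as the paper's remark after Definition \ref{LocArithEqui} asserts without proof. The converse direction is handled properly: the point that unramifiedness is detected by $\sum_i f_i=n$ (given the common degree $n$ and $\sum_i e_if_i=n$) is precisely what lets you upgrade ``same decomposition type on $S_{K,L}$'' to ``same ramified primes,'' and your appeal to Dedekind's theorem identifying ramified primes with divisors of the discriminant is legitimate background. One pedantic step you could make explicit: equality of $L_p(s,\rho_K)$ and $L_p(s,\rho_L)$ as functions of $s$ yields an identity of rational functions in $u=p^{-s}$ because $u$ ranges over infinitely many values; this is routine and does not affect correctness.
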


\begin{proof}
This is a simple argument that can be found in the proof  vi) $\Rightarrow$ ii) of \cite[III, \S1 Theorem 1.3]{Klingen}. 
\end{proof}

We denote by $g_{p}^{K}$ the number of primes in $K$ lying above $p$. Additionally, we denote by $f_{p}^{K}$ the sum of the residue degrees of primes in $K$ above $p$.
 
\begin{corollary}\label{DetTameLAE}
Let $K$ and $L$ be weakly arithmetically equivalent number fields. Suppose that both fields are tame. Then, they have the same discriminant.

\end{corollary}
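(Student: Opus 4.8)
The plan is to compare the two discriminants one rational prime at a time, reducing everything to the combinatorial data that weak arithmetic equivalence controls. First I would invoke Lemma~\ref{RamtypeLAE} to restate the hypothesis concretely: $K$ and $L$ have the same degree $n$, the same set of ramified primes, and for every $p \in S_{K,L}\cup\{-1\}$ the same decomposition type. In particular $f_p^K=f_p^L$ at every ramified finite prime, and the local data at $-1$ agree. Writing ${\rm disc}(K)$ as the signed integer $\pm\prod_p p^{v_p({\rm disc}(K))}$, it then suffices to match $v_p({\rm disc}(K))$ at every finite prime and to match the sign.

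The heart of the argument is the tame discriminant formula. For a finite prime $p$ tamely ramified in $K$ the different exponent at each $\mathfrak{p}\mid p$ equals $e_{\mathfrak{p}}-1$, so, using the fundamental identity $\sum_{\mathfrak{p}\mid p} e_{\mathfrak{p}}f_{\mathfrak{p}}=n$,
\[
v_p({\rm disc}(K))=\sum_{\mathfrak{p}\mid p} f_{\mathfrak{p}}\,(e_{\mathfrak{p}}-1)=\sum_{\mathfrak{p}\mid p} e_{\mathfrak{p}}f_{\mathfrak{p}}-\sum_{\mathfrak{p}\mid p} f_{\mathfrak{p}}=n-f_p^K.
\]
Thus the $p$-adic valuation of the discriminant of a tame field depends only on its degree and on the sum of residue degrees $f_p^K$ --- precisely the two quantities recorded by the local factor $L_p(s,\rho_K)$, and which weak arithmetic equivalence forces to coincide. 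Applying the same formula to $L$ gives $v_p({\rm disc}(K))=n-f_p^K=n-f_p^L=v_p({\rm disc}(L))$ at every ramified prime. At an unramified prime both valuations vanish, and since $K$ and $L$ ramify at exactly the same primes this accounts for all finite places; hence $|{\rm disc}(K)|=|{\rm disc}(L)|$.

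It remains to match the sign, which equals $(-1)^{r_2}$. Here I would use the factor at $-1$: the equality $L_{-1}(s,\rho_K)=L_{-1}(s,\rho_L)$ is an identity of Gamma factors $\Gamma_{\R}^{r_1}\Gamma_{\C}^{r_2}$, and since $\Gamma_{\R}$ and $\Gamma_{\C}$ are distinct functions it forces $r_1^K=r_1^L$ and $r_2^K=r_2^L$. (When $-1$ is unramified both fields are totally real by the first condition of Definition~\ref{LocArithEqui}, so $r_2=0$ and the equal-degree condition already fixes the signature.) Equal signatures give equal signs, and combining this with the previous paragraph yields ${\rm disc}(K)={\rm disc}(L)$.

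The one place where tameness is indispensable --- and the step I would watch most carefully --- is the reduction of the different exponent to $e_{\mathfrak{p}}-1$. Without it the discriminant valuation would involve higher ramification data that the local $L$-factor, which sees only residue degrees, cannot detect; indeed the conclusion genuinely fails in the wild case, which is exactly the phenomenon the paper's ramification hypotheses are designed to exclude.
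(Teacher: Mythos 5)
Your proof is correct and follows essentially the same route as the paper, which likewise combines Lemma~\ref{RamtypeLAE} with the tame different formula $v_p(\mathfrak{d}_{\mathfrak{p}})=e_{\mathfrak{p}}-1$ (cited there as \cite[III, Proposition 13]{Serre2}) to get ${\rm disc}(K)=\prod_p p^{[K:\Q]-f_p^K}$. Your only addition is the explicit verification that the signs $(-1)^{r_2}$ agree via the archimedean factor --- a point the paper's displayed product formula leaves implicit, since equal decomposition type at $p=-1$ already forces equal signatures.
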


\begin{proof}
Thanks to Lemma \ref{RamtypeLAE} we know that $[L:\Q]=[K:\Q]$ and that $f_{p}^{K}=f_{p}^{L}$ for every prime $p$. Since both extensions are tame, we have by \cite[III, Proposition 13]{Serre2} that 
\[{\rm disc}(K)=\prod_{p}p^{[K:\Q]-f_{p}^{K}}=\prod_{p}p^{[L:\Q]-f_{p}^{L}}= {\rm disc}(L).\]

\end{proof}

Recall that a number field $L$ is called {\it arithmetically solitary} or {\it solitary} if one has that $K$ is isomorphic to $L$ for any number field $K$ arithmetically equivalent to $L$ .

\begin{remark}
The notion of weak arithmetic equivalence is quite less restrictive than that of arithmetic equivalence. For instance, there exist pairs of non isomorphic weakly arithmetically equivalent number fields 
which are either:

\begin{itemize}

\item[(1)] Galois extensions of $\Q$,

\item[(2)]  number fields with fundamental discriminant,

\item[(3)]  number fields of degree smaller than $7$.

\end{itemize}

\end{remark}

\begin{example}

The following polynomials, found by using \cite{Jones}, define pairs of non isomorphic weakly arithmetically equivalent number fields satisfying, respectively, conditions (1), (2) and (3) in the above remark.

\begin{itemize}

\item[(1)] The polynomials $x^7 - 609x^5 - 2233x^4 + 36743x^3 + 62118x^2 - 576520x + 3625$ and $x^7 - 609x^5 - 2233x^4 + 48111x^3 - 40194x^2 - 87696x + 77517$ define two non isomorphic Galois extensions, with Galois group $\Z/7\Z$, that are weakly arithmetically equivalent.

\item[(2)] The polynomials $x^6-14x^4-5x^3+52x^2+33x-24$ and $x^6-3x^5-17x^4-x^3+37x^2+27x+5$ define two non isomorphic weakly arithmetically equivalent number fields with fundamental discriminant equal to $725517561=3*241839187.$

\item[(3)] The polynomials $x^3 - 8x - 15$ and $x^3 + 10x - 1$ define two non isomorphic weakly arithmetically equivalent cubic fields.

\end{itemize}

\end{example}

In contrast, for arithmetic equivalence we have: 

\begin{lemma}\label{solitario}

Let $L$ be a number field satisfying either (1),(2) or (3) of the above remark. Then, $L$ is solitary.
\end{lemma}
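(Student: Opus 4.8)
The plan is to convert solitariness into a group-theoretic isolation statement via Gassmann's theorem and then settle the three cases. Let $K$ be any number field arithmetically equivalent to $L$; I must show $K\cong L$. Arithmetically equivalent fields share a Galois closure (see \cite[III]{Klingen}), so set $N=\tilde{K}=\tilde{L}$, $G={\rm Gal}(N/\Q)$, $U={\rm Gal}(N/K)$ and $V={\rm Gal}(N/L)$; in particular $K\subseteq N$. Since $N$ is the Galois closure of $L$, the action of $G$ on the cosets $G/V$ (equivalently on the $n=[L:\Q]$ embeddings of $L$) is faithful and transitive, so I view $G\le S_n$ with $V$ a point stabilizer. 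By Gassmann's theorem, arithmetic equivalence of $K$ and $L$ is equivalent to $U$ and $V$ being Gassmann equivalent, i.e. $|U\cap C|=|V\cap C|$ for every conjugacy class $C$ of $G$ (equivalently ${\rm Ind}_U^G 1\cong {\rm Ind}_V^G 1$), while $K\cong L$ holds precisely when $U$ and $V$ are conjugate in $G$. Thus it suffices to prove, under each of (1), (2), (3), that any subgroup Gassmann equivalent to the point stabilizer $V$ is conjugate to $V$.

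Cases (1) and (3) are short. If $L/\Q$ is Galois then $N=L$ and $V=1$, so Gassmann equivalence gives ${\rm Ind}_U^G 1\cong\C[G]$, forcing $[G:U]=|G|$ and hence $U=1=V$, i.e. $K=L$. (One may bypass the shared-closure input here by taking $N$ to be the Galois closure of $KL$: then $V$ is normal, hence a union of conjugacy classes, and summing the identities $|U\cap C|=|V\cap C|$ over the classes contained in $V$ yields $V\subseteq U$, whence $U=V$ by comparing orders.) In case (3) one has $[G:V]=[L:\Q]<7$, and by the classical fact that Gassmann-equivalent non-conjugate subgroups of a finite group have index at least $7$ — equivalently, that the smallest degree of a pair of non-isomorphic arithmetically equivalent fields is $7$ (see \cite{Perlis} and \cite[III]{Klingen}) — the subgroups $U$ and $V$ are conjugate, so $K\cong L$.

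Case (2) is the substantive one, and the plan is to prove $G=S_n$. By Minkowski's theorem $G$ is generated by its inertia subgroups at the finite ramified primes, so it is enough to check that a fundamental discriminant forces every such inertia subgroup to act as a transposition on the $n$ points. For an odd ramified $p$, fundamentality gives $v_p({\rm disc}\,L)=1$; this is automatically tame (wild ramification contributes at least $2$), and the discriminant formula then forces a single prime above $p$ with $e=2,\ f=1$ and all others unramified, so the order-two inertia acts as a transposition. For $p=2$ one has $v_2({\rm disc}\,L)\in\{2,3\}$, and the only local configurations are a ramified quadratic ($e=2$, a transposition) and, when $v_2=2$, a tame ramified cubic ($e=3$, a $3$-cycle). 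The cubic is excluded: that configuration forces ${\rm disc}\,L\equiv 4\pmod{16}$ (the local cubic discriminant lies in the square class $\equiv 5\pmod 8$ and the unramified factors contribute classes $\equiv 1\pmod 4$), whereas a fundamental discriminant with $v_2=2$ satisfies ${\rm disc}\,L\equiv 12\pmod{16}$; hence the inertia at $2$ is again a transposition. Therefore $G$ is a transitive subgroup of $S_n$ generated by transpositions, so $G=S_n$. Finally the point stabilizer $V\cong S_{n-1}$ is Gassmann isolated in $S_n$: a Gassmann-equivalent $U$ has index $n$, and for $n\ne 6$ the index-$n$ subgroups of $S_n$ are exactly the point stabilizers, while for $n=6$ the exotic class of index-$6$ subgroups has a different permutation character (a transposition fixes four points in the natural action but none in the exotic one) and so cannot be Gassmann equivalent to $V$. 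Thus $U$ is conjugate to $V$ and $K\cong L$.

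The main obstacle is exactly the $2$-adic step in case (2). The tame cubic at $2$ shares the discriminant valuation $v_2=2$ with the quadratic case and would otherwise contribute a $3$-cycle, potentially yielding a proper transitive subgroup of $S_n$ instead of all of $S_n$; ruling it out requires the finer congruence distinguishing ${\rm disc}\,L\equiv 4$ from $\equiv 12\pmod{16}$, and this is the single point where ``fundamental'' is used in full rather than merely ``odd part squarefree.'' The remaining external inputs — that arithmetically equivalent fields share a Galois closure, the minimal Gassmann index $7$, and the isolation of the point stabilizer in $S_n$ (with the $n=6$ subtlety) — are standard and will be cited rather than reproved.
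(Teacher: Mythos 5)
Your proof is correct, but it takes a genuinely different and far more self-contained route than the paper, whose entire proof consists of two citations: items (1) and (3) are quoted from Klingen \cite[III, \S1, Theorem 1.16]{Klingen}, and item (2) from Kondo's theorem \cite[Theorem 1]{Kondo} together with Klingen \cite[III, \S1, Theorem 1.16.c]{Klingen}. In effect you reprove the cited results. For (1) you give the short Gassmann computation ($V=1$ forces ${\rm Ind}_U^G 1\cong\C[G]$, hence $U=1$), including a neat variant that bypasses the shared-closure fact by working in the closure of $KL$ and using normality of $V$; for (3) you invoke the same minimal-index-$7$ fact that the paper's citation encapsulates; and for (2) you reconstruct Kondo's theorem from scratch: Minkowski plus the discriminant analysis shows every inertia image is a transposition, the one delicate point being the exclusion of a tame ramified cubic at $2$. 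Your enumeration of the local possibilities at $2$ is complete (any second ramified factor or larger $e$ pushes $v_2$ past $3$), and your congruence is right: the tame cubic configuration gives square class $4u$ with $u\equiv 1\bmod 4$ (e.g.\ ${\rm disc}(x^3-2)=-108=4\cdot(-27)$ with $-27\equiv 5\bmod 8$, and unramified factors contribute classes $1$ or $5$ mod $8$), so ${\rm disc}\,L\equiv 4\pmod{16}$, incompatible with $\equiv 12\pmod{16}$ for a fundamental discriminant with $v_2=2$. Hence $G$ is transitive and generated by transpositions, so $G=S_n$, and your Gassmann-isolation of the point stabilizer, including the $n=6$ case via the permutation-character value on transpositions under the outer automorphism, is sound. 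What the paper's approach buys is brevity and reliance on established literature; yours buys transparency: it isolates exactly where fundamentality (rather than merely squarefree odd part) is needed and makes the lemma essentially self-contained modulo three standard facts (common normal closure, Gassmann's criterion, minimal Gassmann index $7$) that are reasonable to cite.
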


\begin{proof}

Items (1) and (3) are part of \cite[III, \S1 Theorem 1.16]{Klingen}.  Item (2) follows from \cite[Theorem 1]{Kondo} and \cite[III, \S1 Theorem 1.16.c]{Klingen}.
\end{proof}

We now show that weak arithmetical equivalence determines the integral trace form for a large family of number fields.

\begin{theorem}\label{LAEIsoTrace}
Let $K,L$ be two weakly arithmetically equivalent number fields which are tame and non-totally real. Suppose that any of the following is satisfied:

\begin{itemize}

\item[(a)]  $K$ and $L$ have degree at most $3$

\item[(b)] $K$ has fundamental discriminant.

\item[(c)]  $K$ and $L$ are Galois over $\Q$.

\end{itemize}
\noindent 
Then, the integral trace forms $q_{K}$ and  $q_{L}$ are isometric.

\end{theorem}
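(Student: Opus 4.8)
The plan is to reduce Theorem \ref{LAEIsoTrace} to Theorem \ref{samerootsametrace} by verifying, in each of the three cases, that weak arithmetic equivalence forces the hypotheses of that theorem. First I would observe that by Corollary \ref{DetTameLAE} the tameness of $K$ and $L$ together with weak arithmetic equivalence already guarantees that $K$ and $L$ have the same discriminant; moreover, since weak arithmetic equivalence includes equality of the local factors at $p=-1$, the fields have the same signature (the infinite factor encodes $r_1$ and $r_2$, and both fields are non-totally real). Thus the standing hypotheses of Theorem \ref{samerootsametrace} are met in all three cases, and by Proposition \ref{ultimaprop} it suffices to prove that $q_K$ and $q_L$ lie in the same spinor genus. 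By Theorem \ref{samerootsametrace} this is equivalent to showing that $W_p(\rho_K)=W_p(\rho_L)$ for every odd prime $p$ dividing ${\rm disc}(K)$.

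The heart of the argument is therefore a \emph{local} statement: at each odd ramified prime $p$, the equality of local $L$-factors $L_p(s,\rho_K)=L_p(s,\rho_L)$ should be upgraded to equality of the local root numbers $W_p(\rho_K)=W_p(\rho_L)$. The local $L$-factor records only the decomposition type $(f_1,\dots,f_g)$ of $p$ (equivalently the unramified characters appearing), whereas the local root number depends on finer ramification data, so this implication is genuinely the crux and is false without extra hypotheses. My plan is to treat this via the decomposition-type data supplied by Lemma \ref{RamtypeLAE}, which tells us that $p$ has the same decomposition type in $K$ and in $L$ for every $p\in S_{K,L}\cup\{-1\}$. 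In each of the three listed cases I would argue that, for a tamely ramified prime, the decomposition type together with tameness pins down the local root number $W_p(\rho_K)$ completely.

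Concretely, I expect the case analysis to run as follows. In case (c), where $K$ and $L$ are Galois, the decomposition of $p$ is governed entirely by the inertia and decomposition groups, and for a tame prime the local behavior—hence the local root number—is determined by the ramification index $e$ and residue degree $f$, which are read off from the common decomposition type; so equality of decomposition types yields equality of local root numbers directly. In case (a), low degree ($\le 3$) leaves very few possible tame ramification types at an odd prime, and one can simply enumerate them, checking that in each case the decomposition type determines $W_p$. Case (b), fundamental discriminant, forces the ramification at each odd $p\mid{\rm disc}(K)$ to be of the mildest possible kind (a single prime with inertia of order $2$, analogous to the quadratic situation), again making the local root number a function of the decomposition type alone. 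In every case the conclusion is $W_p(\rho_K)=W_p(\rho_L)$ for all odd $p\mid{\rm disc}(K)$, whence Theorem \ref{samerootsametrace} gives that $q_K$ and $q_L$ share a spinor genus and Proposition \ref{ultimaprop} promotes this to an isometry.

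The main obstacle, and the step deserving the most care, is precisely the claim that for a tamely ramified odd prime the local root number $W_p(\rho_K)$ is determined by the decomposition type of $p$. This is the only place where the three hypotheses (a),(b),(c) enter in an essential way, and it is exactly where the theorem can fail in general: two fields can share the local $L$-factor at $p$ (same decomposition type) yet have different local root numbers when the residual representation at $p$ admits more than one compatible ramified structure. I would therefore organize the proof so that the reductions of the first paragraph are dispatched cleanly, and then devote the bulk of the argument to a careful, case-by-case verification of this local implication—most likely by computing $W_p(\rho_K)$ explicitly for each admissible tame decomposition type in degrees and ramification patterns allowed by (a), (b), or (c).
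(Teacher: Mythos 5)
Your opening reductions are correct and match the paper's frame: Corollary \ref{DetTameLAE} gives equal discriminants, the factor at $p=-1$ gives equal signatures, and Proposition \ref{ultimaprop} together with Theorem \ref{samerootsametrace} reduces everything to matching local data at the odd ramified primes. But at that point your proposal and the paper part ways, and yours stops short of a proof. The paper never computes a root number in cases (a), (b), (c): it dispatches them by citing three results of \cite{Manti2} --- Theorem 3.3 (for tame fields of degree at most $3$, the discriminant determines the integral trace), Theorem 2.15 (fundamental discriminant together with $g_{p}^{K}=g_{p}^{L}$, which Lemma \ref{RamtypeLAE} supplies), and Proposition 2.14 (the Galois case, where equal degree, equal decomposition types and equal discriminant force equal ramification indices $e=n/(fg)$). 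Your plan is to prove the logically equivalent statement $W_{p}(\rho_{K})=W_{p}(\rho_{L})$ by showing that, under each of (a), (b), (c), the decomposition type of a tame prime determines the local root number, ``most likely by computing $W_{p}$ explicitly'' for each admissible type. That computation is exactly the mathematical substance of the theorem, and you never carry it out; as it stands the proposal is a correct reduction wrapped around an unexecuted crux. (Note also that the paper's own logic runs in the opposite direction: it first gets the spinor genus from \cite{Manti2} and then \emph{deduces} equality of root numbers as Theorem \ref{LAEIsoLocRoot}; your direction is legitimate via Theorem \ref{samerootsametrace}, but it is not a shortcut --- it requires the same local analysis.)

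There is also a concrete error in your sketch of case (c): it is false that for a tame prime the local root number is determined by the pair $(e,f)$. The tame ramified part of $K\otimes\Q_{p}$ carries a unit square class as well; for instance $\Q_{p}(\sqrt{p})$ and $\Q_{p}(\sqrt{up})$, with $u$ a nonsquare unit, have the same $(e,f,g)=(2,1,1)$ but different local discriminants and different local root numbers (the two ramified quadratic characters differ by the unramified quadratic character, which flips the sign of $W_{p}$). What rescues the Galois case is the equality of global discriminants from Corollary \ref{DetTameLAE}, which pins down this unit class (when $g$ is odd; when $gf$ is even the ambiguity disappears or cancels) --- but your case analysis never invokes the discriminant, only the decomposition type. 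The paper's closing counterexample (the quartic fields of discriminant $(7\cdot 13\cdot 43)^{2}$) shows how delicate this is: those fields have equal discriminants \emph{and} equal decomposition types at every ramified prime, yet different ramification indices and different $W_{p}$ at $p=7,43$. So any correct execution of your plan must use the discriminant equality inside the local analysis, not just the decomposition type, and must explain precisely how each of (a), (b), (c) excludes the phenomenon in that counterexample.
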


\begin{proof}

Part (a) follows from Corollary \ref{DetTameLAE} and \cite[Theorem 3.3]{Manti2}. Thanks to Lemma \ref{RamtypeLAE}, we have that $g_{p}^{K}=g_{p}^{L}$ for all ramified prime $p$, hence (b) follows from \cite[Theorem 2.15]{Manti2}. If both fields are Galois, then not only every ramified prime $p$ has the same decomposition type in both fields, but it also has the same ramification index. This follows since both fields have the same degree and discriminant(see Corollary \ref{DetTameLAE}). Hence, part (c) follows from \cite[Proposition 2.14]{Manti2}.

\end{proof}

\begin{remark}
 Notice that under the restrictions imposed, Theorem \ref{LAEIsoTrace} gives a positve answer to the following natural question:
\end{remark}
 
 \begin{question}\label{conjecture}
 Let $K,L$ be two weakly arithmetically equivalent number fields which are tame and non-totally real. Are the integral trace forms $q_{K}$ and  $q_{L}$ isometric? 
 \end{question}

If we remove the signature condition in Theorem \ref{LAEIsoTrace}, we can't assure the existence of an isometry between the integral traces. However, by the same argument in the above proof, one sees that $q_{K}$ and  $q_{L}$ belong to the same spinor genus. In particular, thanks to Theorem \ref{samerootsametrace},  we have that $K$ and $L$ have the same local root numbers. Hence, we have:

\begin{theorem}\label{LAEIsoLocRoot}
Let $K,L$ be two tame weakly arithmetically equivalent number fields. Suppose that any of the following is satisfied:

\begin{itemize}

\item[(a)]  $K$ and $L$ have degree at most $3$.

\item[(b)] $K$ has fundamental discriminant.

\item[(c)]  $K$ and $L$ are Galois over $\Q$.

\end{itemize}

\noindent 
Then, $K$ and $L$ have the same local root numbers at every $p$.

\end{theorem}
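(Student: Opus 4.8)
The plan is to run the proof of Theorem~\ref{LAEIsoTrace} only as far as the spinor genus and then translate that into root numbers, rather than upgrading it to an honest isometry. The key point is that the non-totally-real hypothesis of Theorem~\ref{LAEIsoTrace} was used \emph{only} at the very last step, via Proposition~\ref{ultimaprop} (Eichler), to pass from ``same spinor genus'' to ``isometric''. So I would first show that $q_K$ and $q_L$ lie in a common spinor genus under each of (a)--(c), and then extract equality of local root numbers at every place by combining Corollary~\ref{traceroot}, Lemma~\ref{samerootdisc}, and the vanishing of the global root number.

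First I would record what weak arithmetic equivalence provides. Corollary~\ref{DetTameLAE} gives ${\rm disc}(K)={\rm disc}(L)$ from tameness. For the signature, the first clause of Definition~\ref{LocArithEqui} (with the convention $p=-1$) says that $-1$ ramifies in $K$ if and only if it ramifies in $L$, i.e. $K$ is totally real exactly when $L$ is; if both are totally real then equal degrees force signature $(n,0)$ on both, while if neither is, Lemma~\ref{RamtypeLAE} gives that $-1$ has the same decomposition type in both, which is precisely equality of $(r_1,r_2)$. Thus $K$ and $L$ have the same discriminant and signature. With these in hand, the case analysis of Theorem~\ref{LAEIsoTrace} applies unchanged to place $q_K$ and $q_L$ in the same spinor genus: case (a) via Corollary~\ref{DetTameLAE} and \cite[Theorem 3.3]{Manti2}; case (b) via $g_p^K=g_p^L$ from Lemma~\ref{RamtypeLAE} together with \cite[Theorem 2.15]{Manti2}; case (c) via the equal ramification indices and decomposition types forced by equal degree and discriminant for Galois fields, followed by \cite[Proposition 2.14]{Manti2}.

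To conclude I would convert the common spinor genus into root numbers at all places. A common spinor genus is in particular a common genus, so $q_K\otimes\Z_p\cong q_L\otimes\Z_p$ for every finite $p$, whence $h_p(q_K)=h_p(q_L)$ for all finite $p$ --- crucially including $p=2$, which the literal conclusion of Theorem~\ref{samerootsametrace} (odd primes dividing the discriminant) does not reach. Since ${\rm disc}(K)={\rm disc}(L)$, the symbols $(2,d)_p$ coincide and, by Lemma~\ref{samerootdisc}, so do $W_p(\det\rho_K)$ and $W_p(\det\rho_L)$; feeding this into Corollary~\ref{traceroot} upgrades the equality of Hasse invariants to $W_p(\rho_K)=W_p(\rho_L)$ for every finite $p$ (at unramified primes both sides are simply $1$). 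For the archimedean place $p=-1$, I would use that $\rho_K$ and $\rho_L$ are orthogonal, so $W(\rho_K)=W(\rho_L)=1$ by Fr\"ohlich--Queyrut; Deligne's product formula $W(\rho)=\prod_p W_p(\rho)$ then forces the archimedean factors $W_{-1}(\rho_K)$ and $W_{-1}(\rho_L)$ to agree once all finite local root numbers do.

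The only point requiring care beyond transcribing earlier proofs is exactly that Theorem~\ref{samerootsametrace} yields equality of local root numbers only at the odd primes dividing the discriminant, whereas the statement asks for every place. I expect the two enabling observations to be (i) that a common spinor genus degrades to a common genus, giving equality of the $2$-adic Hasse invariant and hence of $W_2$ even though $p=2$ lies outside the scope of Theorem~\ref{samerootsametrace}, and (ii) that the archimedean local root number is not computed directly but is pinned down by the triviality of the global root number through the product formula. No new estimate is needed: the content is in routing the finite-prime equalities through Corollary~\ref{traceroot} and then closing up at infinity.
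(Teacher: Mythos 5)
Your proposal is correct and takes essentially the same route as the paper: the paper likewise runs the argument of Theorem~\ref{LAEIsoTrace} only to the spinor-genus stage (the signature hypothesis being needed solely for Eichler's step) and then extracts the root numbers via Theorem~\ref{samerootsametrace}. Your explicit handling of $p=2$ (spinor genus implies genus, then Corollary~\ref{traceroot} with Lemma~\ref{samerootdisc}) and of the archimedean place (Fr\"ohlich--Queyrut plus Deligne's product formula) merely makes precise details the paper leaves implicit, since Theorem~\ref{samerootsametrace} as stated covers only odd primes dividing the discriminant.
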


Notice that Question \ref{conjecture} can be stated in terms of the local behavior of the Artin $L$-function $L(s,\rho_{L})$ and without any reference to the trace form. Explicitly, thanks to Theorem \ref{samerootsametrace}, Question \ref{conjecture} is equivalent to asking whether or not the equality at all ramified primes $p$ between $L_{p}(s,\rho_{K})=L_{p}(s,\rho_{L})$ implies equality between the local root numbers of $\rho_{K}$ and $\rho_{L}$ for any pair of  number fields $K,L$  that are tame and non-totally real. Since the signature condition we imposed on the number fields is only necessary to get isometry between the integral traces, and not only to get local isometry, we can omit that hypothesis and formulate \ref{conjecture} slightly in more general terms:

\begin{question}\label{conjecture2}
 Let $K,L$ be two tame weakly arithmetically equivalent number fields. Does it follow that $K$ and $L$ have the same local root numbers at every prime $p$?
 \end{question}

\paragraph{{\it Elliptic curves.}} 
A natural analog to the Dedekind zeta function $L(s, K)$ of a number field is the $L$-function $L(s,E)$ of a rational elliptic curve. Using the $\ell$-adic Tate module, for some prime $\ell$, one sees that $L(s,E)$ is the Artin $L$-function of  a $\Z_{\ell}$-representation of $G_{\Q}$. The notion of arithmetic equivalence in this context is equivalent to the one of isogeny class, thanks to Falting's isogeny Theorem. Since this equivalence is quite restrictive, it seems interesting to see what kind of invariants of an elliptic curve are determined by the analog notion of weak arithmetic equivalence. In particular, it is natural to ask if the analog to Question \ref{conjecture2} is valid in this context. It turns out that for semistable elliptic curves this is the case:

\begin{theorem}[Rohrlich]\label{rohrlic}
Let $E/\Q$, $E'/\Q$ be two semistable elliptic curves with bad ramification at the same primes. Suppose that for every bad prime $p$, the local  Hasse-Weil functions of $E$ and $E'$ coincide
\[L_{p}(s,E)=L_{p}(s,E').\] Then, for every prime $p$, $E$ and $E'$ have the same local root numbers \[W_{p}(E)=W_{p}(E').\]
\end{theorem}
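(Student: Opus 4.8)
The plan is to reduce the statement to the explicit classification of local root numbers and local Euler factors for semistable elliptic curves, treating the archimedean place, the primes of good reduction, and the primes of bad (hence multiplicative) reduction separately. The crucial structural input is that semistability forbids additive reduction at every finite prime, so at each bad prime the reduction is either split or non-split multiplicative; this is precisely the regime in which both the local $L$-factor and the local root number admit a simple closed form.

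First I would dispose of the places at which no hypothesis is needed. At the archimedean place the local root number of any elliptic curve over $\R$ equals $-1$, so $W_{\infty}(E)=W_{\infty}(E')$ automatically. At a prime $p$ of good reduction the attached representation is unramified and $W_{p}(E)=+1$; since by hypothesis $E$ and $E'$ have bad reduction at exactly the same primes, they have good reduction at the same primes, whence $W_{p}(E)=W_{p}(E')=1$ there. Thus only the common set of bad primes remains to be analyzed.

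The heart of the argument is at a bad prime $p$. Here I would use that for multiplicative reduction the local Euler factor has degree one,
\[
L_{p}(s,E)=\frac{1}{1-a_{p}\,p^{-s}},\qquad a_{p}=\begin{cases}+1 & \text{split multiplicative},\\ -1 & \text{non-split multiplicative},\end{cases}
\]
so that the two reduction types yield the two \emph{distinct} rational functions $(1-p^{-s})^{-1}$ and $(1+p^{-s})^{-1}$. Consequently the equality $L_{p}(s,E)=L_{p}(s,E')$ forces $E$ and $E'$ to have the same type of multiplicative reduction at $p$. I would then invoke the companion formula for the local root number at a multiplicative prime, namely $W_{p}=-1$ for split and $W_{p}=+1$ for non-split reduction; since the reduction types agree, the root numbers agree. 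Combining the three cases gives $W_{p}(E)=W_{p}(E')$ for every place $p$.

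The main obstacle is not the combinatorial bookkeeping but securing the two explicit local formulas and comparing them consistently. The local root numbers are the Deligne--Langlands local constants of the Weil--Deligne representation attached to $E$ at $p$, and one must know that, up to the unramified twist recorded by the split/non-split dichotomy, this representation --- and hence its $\varepsilon$-factor --- depends only on the reduction type; this is exactly the content of the standard tables for $\mathrm{GL}_{2}$-type local representations, and it is what ties the $L$-factor to the root number. A secondary point to verify is the $\ell$-independence of these local constants, so that a hypothesis phrased via the Hasse--Weil $L$-function (an $\ell$-independent object) legitimately controls the root numbers; this holds because the Weil--Deligne representation at $p\neq\ell$, together with its local $L$- and $\varepsilon$-factors, is independent of the auxiliary prime $\ell$.
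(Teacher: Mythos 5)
Your proof is correct and is essentially the paper's argument spelled out in full: the paper simply cites Rohrlich's formula for local root numbers (\cite[Proposition 3]{Roh1}), which is exactly the split/non-split multiplicative dichotomy you use, namely $W_{p}=-1$ for split and $W_{p}=+1$ for non-split or good reduction, combined with the observation that the degree-one Euler factor $\left(1-a_{p}p^{-s}\right)^{-1}$ detects the reduction type. Your treatment of the good primes, the archimedean place, and the $\ell$-independence point just makes explicit what the citation subsumes.
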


\begin{proof}
This follows immediately from Rohrlich's formula for local root numbers \cite[Proposition 3]{Roh1}. 
\end{proof}

In our analogy between rational elliptic curves and number fields, the conductor plays the role of the discriminant. Henceforth, we can think of semistability for an elliptic curve as the analog, for a number field, of having square free discriminant. Keeping in mind this analogy we see that Theorem \ref{LAEIsoLocRoot} part (b) is the number theoretic version of Rohrlich's theorem. The following shows, as in the case of elliptic curves, that the hypothesis of having square free (conductor/discriminant) can not be removed from Theorem \ref{LAEIsoLocRoot}. In particular, the following gives a negative answer to Questions \ref{conjecture} and \ref{conjecture2}.

\begin{lemma}
Let $K$ and $L$ be the number fields defined by $x^4 - x^3 + 4x^2 + 68x + 152$ and $x^4 - 15x^2 - 21x + 121$ respectively. Then, $K$ and $L$ are tame non-totally real weakly arithmetically equivalent number fields with different root numbers at $p=7$ and $p=43$.
\end{lemma}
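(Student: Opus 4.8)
The statement is a concrete verification, so the plan is computational, organized around the criteria developed earlier in the paper. First I would confirm the structural hypotheses. Checking that both quartics are irreducible over $\Q$ shows $K$ and $L$ are degree-$4$ fields, and computing the field discriminants ${\rm disc}(K)$ and ${\rm disc}(L)$ identifies the ramified primes as their prime divisors. I expect $7$ and $43$ to appear to an \emph{even} power, so that neither discriminant is fundamental; this is exactly what makes the example a genuine counterexample to Questions \ref{conjecture} and \ref{conjecture2} and shows the square-free hypothesis cannot be dropped from Theorem \ref{LAEIsoLocRoot}(b). Tameness follows once I verify that $2$ is unramified, since in a quartic field $2$ is the only prime at which wild ramification can occur. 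The non-totally-real condition is confirmed by computing the signatures $(r_1,r_2)$ and checking $r_2>0$ for both fields.

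Next I would establish weak arithmetic equivalence. By Lemma \ref{RamtypeLAE} it suffices to check that $K$ and $L$ have the same degree (clear) and the same decomposition type at every prime ramified in either field and at the infinite place $p=-1$. For each ramified prime the decomposition type is obtained either by factoring the defining polynomial modulo $p$ via Dedekind's criterion (when $p$ does not divide the relevant index) or directly from the factorization of $pO_K$ and $pO_L$; the type at $-1$ is read off from the signatures already computed. Matching these data on the two sides yields weak arithmetic equivalence, and by Corollary \ref{DetTameLAE} this forces ${\rm disc}(K)={\rm disc}(L)$, consistent with the direct discriminant computation.

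Finally I would compare the root numbers at $7$ and $43$. Rather than evaluate the Artin local root numbers of the induced representation directly, I would use the paper's own machinery. Since $K$ and $L$ share a discriminant $d$, Lemma \ref{samerootdisc} gives $W_{p}(\det(\rho_K))=W_{p}(\det(\rho_L))$, so Corollary \ref{traceroot} reduces the comparison, for any odd prime $p$, to the local Hasse--Witt invariants of the trace forms:
\[
W_{p}(\rho_K)=W_{p}(\rho_L) \iff h_{p}(q_K)=h_{p}(q_L).
\]
It therefore suffices to compute $h_{7}$ and $h_{43}$ of the two integral trace forms and exhibit a disagreement at each. Using the Jordan decomposition of Theorem \ref{GenusShape}, the local form $q_K\otimes\Z_p$ is determined by unit parameters $\alpha,\beta\in\Z_p^{*}$, from whose classes modulo squares the Hasse symbol is read off directly; the same holds for $L$. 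The decomposition type fixes the block sizes in this decomposition but \emph{not} the units $\alpha,\beta$, and it is precisely at this finer level that $K$ and $L$ differ.

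The main obstacle, and the entire point of the example, is this last step: I must extract the correct quadratic-residue data (the classes of $\alpha$ and $\beta$ modulo squares at $p=7$ and $p=43$) and compute the Hasse symbols with the right conventions, since the decomposition types, and hence the local $L$-factors, are by design identical. This is a finite but delicate computation, best carried out in MAGMA. The conceptual work, however, is already supplied by Corollary \ref{traceroot}, which converts a subtle comparison of Artin local root numbers into a transparent comparison of local trace-form invariants.
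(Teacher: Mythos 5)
Your proposal follows essentially the same route as the paper's proof: verify signature and discriminant, use Lemma \ref{RamtypeLAE} on the decomposition types at the ramified primes to get weak arithmetic equivalence, compute the local Jordan forms of $q_K$ and $q_L$ at $7$ and $43$ via Theorem \ref{GenusShape}, and convert the resulting disagreement of Hasse--Witt invariants into a disagreement of local root numbers through Corollary \ref{traceroot} and Lemma \ref{samerootdisc} (the paper phrases this last step as ``arguing as in the first part of the proof of Theorem \ref{samerootsametrace}''). One small correction: your claim that in a quartic field wild ramification can occur only at $2$ is false, since a prime above $3$ can have ramification index $e=3$; in this example tameness holds for the cleaner reason that the ramified primes $7$, $13$, $43$ all exceed the degree $4$, so no ramification index can be divisible by them.
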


\begin{proof}
The fields $K$ and $L$ have signature $(0,2)$ and discriminant $d=(7\cdot13\cdot43)^{2}$. In particular, $K$ and $L$ are tame. Let $S=\{7,13,43\}$. The following table contains, for each prime $p$ in $S$, its decomposition type $(f_1,...,f_g)$, and respective ramification indices $(e_1,...,e_g) $, in the fields $K$ and $L$. \\
\[\begin{tabular}{|l|c|c|c|}\hline
\thead{$p$}&
\thead{7}&\thead{13} &\thead{43}\\    \hline
\thead{$K$} & $(1,1) \ (1,3)$ & $(1,1) \ (2,2)$ & $(1,1) \ (2,2)$ \\    \hline
\thead{$L$} & $(1,1) \ (2,2)$ & $(1,1) \ (1,3)$ & $(1,1) \ (1,3)$ \\    \hline

\end{tabular}\]
It follows from Lemma \ref{RamtypeLAE} that $K$ and $L$ are weakly arithmetically equivalent. Using the decomposition given in \cite[Theorem 0.1]{Manti3}, or by direct computation, we see that 
\[q_{K} \otimes \Z_{7} \cong \langle 1,3,7,21 \rangle, \ q_{L} \otimes \Z_{7} \cong \langle 1,1,7,7 \rangle\]
and  
\[q_{K} \otimes \Z_{43} \cong \langle 1,1,43,43 \rangle, \ q_{L} \otimes \Z_{43} \cong \langle 1,3,43,129 \rangle.\] In particular, \[h_{7}(q_{K})=1\neq-1=h_{7}(q_{L}) \ {\rm and} \ h_{43}(q_{K})=-1\neq 1= h_{43}(q_{L}).\] Therefore, arguing as in the first part of the proof of Theorem \ref{samerootsametrace}, we see that \[W_{p}(\rho_{K}) \neq W_{p}(\rho_{L})\] for $p =7,43$.
\end{proof}

\section*{Acknowledgements}
In the first place I would like to thank the referee for the various constructive and quite
valuable comments and suggestions on the paper. I thank  Lisa (Powers) Larsson for her helpful comments on a previous version of this paper, and to Val\'ery Mah\'e for providing me with a reference for Rohrlich's theorem.

\noindent
Guillermo Mantilla-Soler\\
Departamento de Matem\'aticas,\\
Universidad de los Andes, \\
Carrera 1 N. 18A - 10, Bogot\'a, \\
Colombia.\\
g.mantilla691@uniandes.edu.co

\end{document}